\newtheorem{theorem}{Theorem}[section]
\newtheorem{definition}[theorem]{Definition}
\newtheorem{corollary}[theorem]{Corollary}
\newtheorem{lemma}[theorem]{Lemma}
\newtheorem{example}[theorem]{Example}
\newtheorem{proposition}[theorem]{Proposition}
\patchcmd{\@maketitle}{\LARGE \@title}{\fontsize{16}{19.2}\selectfont\@title}{}{}
\newsavebox\affbox
\author[1*]{\textbf{Xiaotong Liu}}
\author[2]{\textbf{Yiyu Liang}}
\affil[1,2]{ School of Mathematics and Statistics,
Beijing Jiaotong University, Beijing 100044,
PEOPLE’S REPUBLIC OF CHINA
}
\titlespacing\section{0pt}{12pt plus 4pt minus 2pt}{0pt plus 2pt minus 2pt}
\titlespacing\subsection{12pt}{12pt plus 4pt minus 2pt}{0pt plus 2pt minus 2pt}
\titlespacing\subsubsection{12pt}{12pt plus 4pt minus 2pt}{0pt plus 2pt minus 2pt}
\titleformat{\section}{\normalfont\fontsize{10}{15}\bfseries}{\thesection.}{1em}{}
\titleformat{\subsection}{\normalfont\fontsize{10}{15}\bfseries}{\thesubsection.}{1em}{}
\titleformat{\subsubsection}{\normalfont\fontsize{10}{15}\bfseries}{\thesubsubsection.}{1em}{}
\titleformat{\author}{\normalfont\fontsize{10}{15}\bfseries}{\thesection}{1em}{}
\title{\textbf{\huge A signal recovery guarantee with Restricted Isometry Property and Null Space Property for weighted $l_1$ minimization}}
\date{2023/09}    
\begin{document}

\pagestyle{headings}	
\newpage
\setcounter{page}{1}
\renewcommand{\thepage}{\arabic{page}}

\captionsetup[figure]{labelfont={bf},labelformat={default},labelsep=period,name={Figure }}	\captionsetup[table]{labelfont={bf},labelformat={default},labelsep=period,name={Table }}
\setlength{\parskip}{0.5em}
	
\maketitle
	
\noindent\rule{15cm}{0.5pt}
	\begin{abstract}
		\textbf{Abstract } Signal reconstruction is a crucial aspect of compressive sensing. In weighted cases, there are two common types of weights.  In order to establish a unified framework for handling various types of weights, the sparse function is introduced. By employing this sparse function, a generalized form of the weighted null space property is developed, which is sufficient and necessary to exact recovery through weighted $\ell_1$ minimization. This paper will provide a new recovery guarantee called $\omega$-RIP-NSP with the weighted $\ell_1$ minimization, combining the weighted null space property and the weighted restricted isometry property. The new recovery guarantee only depends on the kernel of matrices and provides robust and stable error bounds. The third aim is to explain the relationships between $\omega$-RIP, $\omega$-RIP-NSP and $\omega$-NSP. $\omega$-RIP is obviously stronger than $\omega$-RIP-NSP by definition. We show that $\omega$-RIP-NSP is stronger than the weighted null space property by constructing a matrix that satisfies the weighted null space property but not $\omega$-RIP-NSP.

		\let\thefootnote\relax\footnotetext{
			\small $^{*}$\textbf{Corresponding author.} \textit{
				\textit{E-mail address: \color{cyan}xiaotongl@bjtu.edu.cn}}\\
			    Yiyu Liang is supported by the National Natural Science Foundation of China (Grant Nos. 12271021 and 11971402).
			}
		\textbf{\textit{Keywords}}: \textit{Compressive sensing; Weighted $\ell_1$ minimization; Restricted isometry property; Null space property; Weighted sparse}

        \textbf{MSC}: 15A12 94A12 47A52
	\end{abstract}
\noindent\rule{15cm}{0.4pt}

\section{Introduction}

In recent years, compressive sensing\cite{donoho2006}\cite{candesorigin}\cite{candesorigin2} has emerged as a rapidly developing field in both theory and applications. A comprehensive introduction to compressive sensing can be found in \cite{j2014A}. The main motivation of compressive sensing is to efficiently reconstruct high-dimensional signals from a small number of measurements. This is achieved by solving an underdetermined linear system, where the signals are assumed to be sparse or compressible in some domain. The process can be described mathematically as solving $y=Ax$, where $y$ is the vector of measurements, $x$ is the unknown signal, and $A\in\mathbb{C}^{m\times N}$ is a sensing matrix with $m<N$. The success of reconstruction is ensured by the well-defined properties of the sensing matrix, such as the Restricted Isometry Property (RIP) and the Null Space Property (NSP). The RIP characterizes $\|Ax\|\approx\|x\|$, while the NSP characterizes the properties of all sparse or compressible signals in the null space. These properties provide a powerful framework for ensuring their robustness and stability in reconstructing the signals.

For some structured measurement matrices, with prior knowledge of the matrices or signals, the weights can be involved to emphasize or de-emphasize certain parts of the signal, and it is a valuable tool for improving the performance of the reconstruction and has been studied in various contexts\cite{kha2009}\cite{vas2009}\cite{von2007}.

Over the past decade, the weighted $\ell_1$ minimization program has drawn large numbers of researchers to pay their attention to sparse signal recovery \cite{rauhut2016}\cite{CHEN2019417}\cite{CHEN201883}\cite{fried2012}. It can be formulated as 
\begin{equation}
\label{omegal1}
    \min_{x\in\mathbb{C}^N}\|x\|_{\omega,1}\,\text{subject to}\,\|y-Ax\|_{2}\leq\epsilon.
\end{equation}
Where $x\in\mathbb{C}^N$ is the aim vector, $y\in\mathbb{C}^m$ is the measurements vector and $\epsilon>0$ is the noise bound and $\|x\|_{\omega,1}=\sum_{i=1}^N\omega_i|x_i|$.

There are various types of weights that can be used in compressive sensing. In some applications, it may be possible to estimate the support of the signal or its largest coefficients, so there are weights with the prior information support. For example, in video signals, temporal correlation can be exploited to estimate the support using previously decoded frames\cite{fried2012}. In this case, the weight vector is $\mathbb{\omega}\in[0,1]^n$ and the sparsity is the same as the standard sparsity, which is defined as the number of nonzero entries on the set\cite{ge2021}\cite{ge2021weighted}\cite{zhou2013}.  For that type of weights, \cite{fried2012}\cite{peng2014}\cite{yu2013}\cite{ge2021}\cite{ge2021weighted} have explored sufficient conditions for weighted $\ell_1$ minimization using standard RIP and NSP and offered improved error bounds or RIP constants bounds. However, the use of standard RIP and NSP limits the extent to which recovery guarantees can improve on those for unweighted $\ell_1$ minimization. To further improve error estimation, the investigation of the weighted $\ell_1$ minimization necessitates some extended versions of RIP and NSP. Zhou et al. \cite{zhou2013} proved that one of the extended versions of NSP, which is called the weighted null space property, is the sufficient and necessary condition for exact recovery, and provided an algorithm for the weighted $\ell_1$ minimization.

Motivated by application to function interpolation, another type of weights has been investigated in\cite{rauhut2016}\cite{huo2018}, where the weights are considered into the sparsity structure.  And the weights are described as $\omega_i\geq1$ and the sparsity is the sum of the square of the weights on the set. In \cite{rauhut2016}, Rauhut introduced the extended properties of this type of weights, including  the weighted null space property, the weighted robust null space property and the weighted restricted isometry property.

While both the weighted restricted isometry property and the weighted null space property provide a recovery guarantee for signal reconstruction, they differ in some aspects. The weighted null space property is a necessary and sufficient condition for perfect recovery in noiseless cases and only depends on the null space of the measurement matrix. the weighted restricted isometry property provides a criterion for measuring the performance of the measurement matrix because of the easier theoretical justification than the weighted null space property. The weighted restricted isometry property also can provide a sufficient condition for stable and robust error bound of weighted $\ell_1$ minimization.

It is noteworthy that if the matrix $A$ satisfies the weighted null space property, the signal $x$ can be exactly reconstructed via the undetermined system $Ax=y$. Importantly, the applicability of the weighted null space property is rooted in the null space itself. Therefore, even if the system undergoes stretching, as long as the null space remains unchanged, the recovery for $x$ remains attainable.
Specifically,  if signal $x$ can be exactly reconstructed within the system $Ax=y$, it should also remain recoverable within the system $2Ax=2y$. In this context, both matrices $A$ and $2A$ share the same null space. However, even though $A$ and $2A$ have identical null spaces, $2A$ may not satisfy the weighted restricted isometry property when subjected to the fixed restricted isometry constant associated with $A$.
Contrary to the weighted null space property which only depends on the null space, the properties of the weighted restricted isometry property is susceptible to distortion when the matrices are stretched. Therefore, they cannot solely depend on the null space, nor accurately reflect the invariant property of the system.

From noisy measurements, the robustness of  the reconstruction of compressible signals using weighted $\ell_1$ minimization can not be guaranteed by the weighted null space property\cite{zhou2013}. However, if matrix $A$ satisfies the weighted restricted isometry property, even with the noisy measurements and compressible but not sparse signal, the reconstruction can still be assured with a robust and stable error bound.

 To unitize different types of weights, we generalize the properties, which include the generalized weighted null space ($\omega$-NSP, see in Definition \ref{ensp}) and the generalized weighted restricted isometry property ($\omega$-RIP, see in Definition \ref{wrip}),  using a \textit{ sparse function} (as discussed in Section \ref{2}). 
In section \ref{section2}, the first result for this paper proves that $\omega$-NSP is the sufficient and necessary condition of the exact recovery via the weighted $\ell_1$ minimization.

In Section \ref{section3}, the main point is to find a new property combining the advantages of $\omega$-NSP and $\omega$-RIP, which means it only depends on the null space of matrices, and it guarantees stable and robust recovery.
Inspired by \cite{cahill2016} and the characteristic of $\omega$-RIP and $\omega$-NSP, the combination of  $\omega$-RIP and $\omega$-NSP, denoted by $\omega$-RIP-NSP. 
If a matrix $A$ satisfies $\omega$-RIP, it follows that any matrix sharing the same null space as $A$ satisfies the $\omega$-RIP-NSP (see in Definition \ref{wripnsp}).
According to the definition, it is easy to know that $\omega$-RIP-NSP only depends on the null space, and the latter needs to be proven, which is one of the main results of this work:
If $\omega$-RIP provides stable and robust error bounds for the weighted $\ell_1$ minimization,  it can be inferred that the $\omega$-RIP-NSP property will also ensure stable and robust reconstruction through the weighted $\ell_1$ minimization (see in Theorem \ref{thmbd}).
In two special cases mentioned above, we have new results implied by Theorem \ref{thmbd}.
In the case of weights where sparsity is defined by the sum of the squares of the weights, Rauhut provides results indicating that $\omega$-RIP provides stable and robust error bounds\cite{rauhut2016}. So the new bounds guaranteed by $\omega$-RIP-NSP are gained.
For the weights with the standard sparsity, there are the results that $\omega$-RIP can guarantee stable and robust reconstruction with the weights have the same value of constants on all components\cite{zhou2013}. In  Subsection \ref{ex2}, we present that $\omega$-RIP can guarantee stable and robust reconstruction even when the weights exhibit varying values, all of which are less than 1, across their components (see in Theorem \ref{01th}).
Moreover, the constants featured in these error bounds are linked to the specific type of weights being considered. This connection arises from the relationship between the number of partitions, as mentioned in the Lemma \ref{op}, and the method used to define sparsity for that particular type of weights.

The third objective is to explain the interrelationship between $\omega$-RIP, $\omega$-RIP-NSP and $\omega$-NSP. In Section \ref{section4}, the first result is that $\omega$-RIP-NSP strictly contains $\omega$-RIP (see in Proposition \ref{rrn}).
The proof of $\omega$-NSP strictly includes $\omega$-RIP-NSP is achieved by proving there exist matrices that satisfy $\omega$-NSP but fail to satisfy $\omega$-RIP-NSP (see in Theorem \ref{1.3} and Theorem \ref{result2}).
The proof proceeds in a similar manner to those in \cite{cahill2016},  with certain modifications. 
In order to construct the counter-example, it is essential to narrow the focus to the specific type of weights under consideration.
For the weights with the sparsity defined by the sum of the square of the weights on the set, instead of utilizing partial discrete Fourier matrices, the proof utilizes general unitary matrices to construct the requisite matrices.  
For the weights with standard sparsity, according to the result presented in Theorem \ref{01th}, we used the partial Fourier matrix to construct the matrix satisfying $\omega$-NSP with the arbitrarily chosen weights.
Compared with the proof in \cite{cahill2016}, the current proof is more extensive, as it accommodates arbitrary choices of weights, and covers a broader category of bounded orthonormal systems.

For the weights with standard sparsity, another robust property called $\omega$-robust-NSP, is involved. We force to show that $\omega$-RIP-NSP and $\omega$-robust-NSP are not included by each other. Assume matrix $\Phi$ satisfies $\omega$-robust-NSP, then for any invertible matrix $U$, $U\Phi$ shares the same null space with $\Phi$. Then for some matrices $U$, $U\Phi$ will violate $\omega$-robust-NSP (see in Theorem \ref{r2n}). 
On the other hand, identifying the set of matrices that satisfy $\omega$-robust-NSP but not $\omega$-RIP can be challenging. This remains an open question, even in the unweighted case, where constructing matrices that fulfill robust null space property but not RIP is still an area of active research.

Let us introduce some notations that will be mentioned in the following paper. The usual $\ell_\infty$ norm in $\mathbb{C}^N$ is denoted by $\|x\|_\infty=\max_{j=1,\ldots,N}|x_j|$. And $\|x\|_0:=\#\{j:x_j\ne 0\}$. The null space of matrix $A$ is denoted by $\ker(A)$. The index set of support of vector $x\in\mathbb{C}^N$ is denoted by $[N]={1,2,\ldots,N}$. The complement set of set $S$ is $S^c$. And $x_S$ is the vector has the same entries as $x$ on $S$, and 0 on $S^c$.

\section{The generalized weighted null space property}
\label{section2}
In this section, we introduce the generalized weighted null space property which guarantees exact recovery through weighted $\ell_1$ minimization. 
The weights typically take the form of positive constants assigned to the elements of a vector's index set. These weights define a corresponding weighted norm, which quantifies the importance or relevance of each element in the vector.
For $x\in\mathbb{C}^N$ and weight $\omega=[\omega_1,\ldots,\omega_N]^T,\;\omega_j\geq0,\;j=1,\ldots,N$, the definition of the weighted $\ell_1$ norm is $\|\boldsymbol{x}\|_{\omega,1}=\sum_{j=1}^{N}\omega_j|x_j|$.

The differences between types of weights also include the definition of sparsity. To achieve unification, we introduce the sparse function. For any weight $\omega=[\omega_1,\ldots,\omega_N]^T,\;\omega_j\geq0,\;j=1,\ldots,N$, and the index set $S\subset[N]$, \textit{sparse function}  $\nu_{\omega}(S)$ is the function has following properties:
\begin{itemize}
    \item $\nu_{\omega}(S)\geq0$;
    \item For the discrete sets $S_1\cap S_2=\emptyset$, $\nu_\omega(S_1\cup S_2)=\nu_\omega(S_1)+\nu_\omega(S_2)$.
\end{itemize}
The most useful examples are that $\nu_{\omega}(S)=|S|$ or $\nu_{\omega}(S)=\sum_{i\in S}\omega_i^2$.
 We call a vector $x\in\mathbb{C}^N$ \textit{weighted s-sparse} with positive integer $s$, if for the support $S=supp(x)$ of $x$, $\nu_{\omega}(S)\leq s$. 
Conditions of the form $s>\max_{i\in[N]}\nu_{\omega}(\{i\})$ are somewhat natural in the context of weighted sparse approximations, as those indices with weights $s<\max_{i\in[N]}\nu_{\omega}(\{i\})$ cannot possibly contribute to the subset satisfying weighted $s$-sparse. For the rest of the paper, we assume that $s\geq2\max_{i\in[N]}\nu_{\omega}(\{i\})$ for technical reasons.

Moreover, since NSP involves solely the weighted $\ell_1$ norms of vectors in the kernel of measurement matrices, a version of the generalized weighted null space property, denoted by $\omega$-NSP, can be formulated to unify the weighted null space properties for different types of weights.

Additionally, let set S be a subset of $[N]$,  $\boldsymbol{x}_S$ denotes the restriction of $\boldsymbol{x}$ to S, which is defined to be the vector that equals $\boldsymbol{x}$ on S, and zero elsewhere. 
\begin{definition}[$\omega$-NSP]
\label{ensp}
    For given weights $\omega$,  the matrix $A\in \mathbb{C}^{m\times{N}}$ is said to satisfy the generalized weighted null space property ($\omega$-NSP) of order $s$ if it satisfies:

    \begin{equation*}
\|\upsilon_{S}\|_{\omega,1}<\|\upsilon_{S^{C}}\|_{\omega,1}\;\mbox{for all}\;\upsilon\in\ker(A)\;\mbox{and}\;S\subset[N]\;with\;\nu_{\omega}(S)\leq s.
\end{equation*}
where $\nu_{\omega}(S)$ is a sparse function of the set $S$.
\end{definition}

There is another equivalent form of $\omega$-NSP with the \textit{null space constant} $\gamma$, the equivalence can be proven with the compactness of $\ker(A)\cap\mathbb{S}^{d-1}$, where $\mathbb{S}^{d-1}$ is a unit sphere.
\begin{proposition}
    For given weights $\omega$ and constant $\gamma\in(0,1)$,  the matrix $A\in \mathbb{C}^{m\times{N}}$ is said to satisfy the generalized weighted null space property ($\omega$-NSP) of order $s$ with $\gamma$ if it satisfies:

    \begin{equation*}
\|\upsilon_{S}\|_{\omega,1}\leq\gamma\|\upsilon_{S^{C}}\|_{\omega,1}\;\mbox{for all}\;\upsilon\in\ker(A)\;\mbox{and}\;S\subset[N]\;with\;\nu_{\omega}(S)\leq s.
\end{equation*}
where $\nu_{\omega}(S)$ is a sparse function of the set $S$.
\end{proposition}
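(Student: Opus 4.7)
The statement above, as it appears in the excerpt, is a definition rather than a theorem: it uses the linguistic form ``$A$ is said to satisfy $\omega$-NSP of order $s$ with $\gamma$ \emph{if} it satisfies \dots'', so the quantified inequality functions as a defining condition and not as a claim derived from prior hypotheses. Consequently, there is no proof obligation in the strict sense. The only plan that can be made is to verify that the declaration is well-posed, which is immediate from the preceding material of Section~\ref{section2}: the parameter $\gamma$ is restricted to $(0,1)$, $s$ is a positive integer, the family $\{S\subset[N]:\nu_\omega(S)\leq s\}$ is non-empty by virtue of the standing assumption $s\geq 2\max_{i\in[N]}\nu_\omega(\{i\})$, and the weighted $\ell_1$ norms $\|\upsilon_S\|_{\omega,1}$ and $\|\upsilon_{S^C}\|_{\omega,1}$ are defined for every $\upsilon\in\ker(A)$. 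With these observations in place the declaration makes sense exactly as written.

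The substantive mathematical content one would naturally associate with a proposition of this shape is the equivalence between the present $\gamma$-formulation and the strict-inequality formulation of Definition~\ref{ensp}; the paragraph preceding the statement anticipates this by noting that ``the equivalence can be proven with the compactness of $\ker(A)\cap\mathbb{S}^{d-1}$''. However, this equivalence is not part of what the statement actually asserts, so I will not plan a derivation of it here. If the authors intended the proposition to make such an assertion, the wording ``is said to satisfy \dots if'' would need to be strengthened to a bi-conditional linking the two forms; as worded, no such claim is present, and no deduction is therefore called for.

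The principal obstacle is thus interpretive rather than technical: one must decide whether to read the proposition literally, in which case nothing is to be proved beyond confirming that the formula defines something, or to treat it as shorthand for the equivalence indicated in the surrounding text, in which case a standard compactness argument would supply the missing proof. Under the literal reading requested, my proposal is simply to confirm that the definition is meaningful and internally consistent, as outlined above, and to flag for the reader that any further derivation would require first promoting the statement from a definitional ``if'' to a substantive ``if and only if''.
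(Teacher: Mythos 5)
There is a genuine gap here, and it is a gap of engagement rather than of technique. The paper does not intend this statement as a bare declaration: it is labelled a Proposition precisely because it is offered as an \emph{equivalent} reformulation of Definition \ref{ensp}, and the paper's own (admittedly terse) justification is the sentence immediately preceding it, namely that the equivalence follows from the compactness of $\ker(A)\cap\mathbb{S}^{d-1}$. By reading the ``is said to satisfy \dots if'' wording literally and explicitly declining to derive the equivalence, your proposal contains none of the mathematics that this Proposition is meant to carry; checking that the displayed condition is well-posed is not the proof obligation the paper has in mind.

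Concretely, the missing argument is the following. That the $\gamma$-form with $\gamma\in(0,1)$ implies the strict form of Definition \ref{ensp} is immediate whenever $\|\upsilon_{S^c}\|_{\omega,1}>0$ (with strictly positive weights this is automatic for nonzero $\upsilon$). The substantive direction is that the pointwise strict inequality upgrades to a \emph{uniform} constant $\gamma<1$. By homogeneity of both sides one may restrict to $\upsilon\in K:=\ker(A)\cap\mathbb{S}^{N-1}$, which is compact, and there are only finitely many sets $S\subset[N]$ with $\nu_{\omega}(S)\leq s$. For each such $S$, the strict inequality forces $\|\upsilon_{S^c}\|_{\omega,1}>\|\upsilon_S\|_{\omega,1}\geq 0$ on $K$, so the ratio $\upsilon\mapsto\|\upsilon_S\|_{\omega,1}/\|\upsilon_{S^c}\|_{\omega,1}$ is continuous on $K$ and pointwise $<1$; by compactness its supremum is attained and equals some $\gamma_S<1$, and $\gamma:=\max_S\gamma_S<1$ over the finitely many admissible $S$ then works for every $\upsilon\in\ker(A)$ after rescaling (the case $\upsilon=0$ being trivial). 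This extreme-value step is exactly what the paper's reference to compactness encodes, and it is what your proposal should have supplied; had you carried it out, your route would have coincided with the paper's.
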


Similar to the standard null space property, $\omega$-NSP also is a sufficient and necessary condition for the exact recovery of noiseless measurements. The result is described in the following theorem. And what calls special attention is that $\nu_{\omega}(S)$ must be consistent with that in Definition \ref{ensp}.
\begin{theorem}
\label{ewl1}
    For given weights $\omega$, every vector $\hat{x}\in\mathbb{C}^N$ whose support $S$ satisfies $\nu_{\omega}(S)\leq s$ as in Definition \ref{ensp} is the unique solution of 
    \begin{equation}
    \label{exl1}
        \min_{z\in\mathbb{C}^N}\|z\|_{\omega,1}\,\text{subject to}\ \Phi\hat{x}=\Phi z,
    \end{equation}
    if and only if $\Phi$ satisfies the $\omega$-NSP of order $s$.
\end{theorem}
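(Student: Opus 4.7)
The plan is to prove the biconditional in the standard null-space-property style, adapted to the weighted setting, using the additivity of the sparse function $\nu_{\omega}$ on disjoint sets. A preliminary observation I would record first is the monotonicity property: if $T\subseteq S$, then writing $S=T\cup(S\setminus T)$ as a disjoint union and applying the additivity axiom together with nonnegativity gives $\nu_{\omega}(T)\le\nu_{\omega}(S)$. This will be used repeatedly without comment.

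For the sufficiency direction ($\omega$-NSP implies exact recovery), suppose $\Phi$ satisfies $\omega$-NSP of order $s$ and $\hat{x}$ has support $S$ with $\nu_{\omega}(S)\le s$. Let $z\ne\hat{x}$ satisfy $\Phi z=\Phi\hat{x}$ and set $v=z-\hat{x}\in\ker(\Phi)\setminus\{0\}$. Since $\hat{x}_{S^c}=0$, I compute
\begin{equation*}
\|\hat{x}\|_{\omega,1}=\|\hat{x}_S\|_{\omega,1}\le\|\hat{x}_S-z_S\|_{\omega,1}+\|z_S\|_{\omega,1}=\|v_S\|_{\omega,1}+\|z_S\|_{\omega,1}.
\end{equation*}
Applying the $\omega$-NSP inequality to $v$ on $S$ then adding $\|z_S\|_{\omega,1}+\|z_{S^c}\|_{\omega,1}=\|z\|_{\omega,1}$ on the right side via $\|v_{S^c}\|_{\omega,1}=\|z_{S^c}\|_{\omega,1}$ (again using $\hat{x}_{S^c}=0$) gives $\|\hat{x}\|_{\omega,1}<\|z\|_{\omega,1}$, establishing uniqueness.

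For the necessity direction, assume every $\hat{x}$ supported on some $S$ with $\nu_{\omega}(S)\le s$ is the unique $\omega$-$\ell_1$ minimizer. Fix an arbitrary $v\in\ker(\Phi)\setminus\{0\}$ and an arbitrary $S\subset[N]$ with $\nu_{\omega}(S)\le s$. I would set $\hat{x}:=v_S$ and $z:=-v_{S^c}$. By the monotonicity observation, $\hat{x}$ has support inside $S$ and hence satisfies $\nu_{\omega}(\mathrm{supp}(\hat{x}))\le s$; since $v\ne 0$, the vectors $v_S$ and $-v_{S^c}$ are supported on disjoint sets and cannot be equal, so $z\ne\hat{x}$. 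Moreover $\Phi\hat{x}-\Phi z=\Phi v=0$, so $z$ is a feasible competitor. Uniqueness of $\hat{x}$ as a minimizer then forces $\|\hat{x}\|_{\omega,1}<\|z\|_{\omega,1}$, i.e.\ $\|v_S\|_{\omega,1}<\|v_{S^c}\|_{\omega,1}$, which is exactly $\omega$-NSP.

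The only mildly subtle point is the necessity direction, where one has to verify that the candidate $\hat{x}=v_S$ is legitimately weighted $s$-sparse even though $\mathrm{supp}(\hat{x})$ may be a strict subset of $S$; this is precisely what the disjoint-additivity axiom for $\nu_{\omega}$ buys us. Everything else reduces to the same triangle-inequality/feasibility bookkeeping used for the unweighted NSP, so I do not expect any genuine obstacle beyond carefully invoking the sparse-function axioms at the right moments.
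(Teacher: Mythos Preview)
Your proposal is correct and follows essentially the same approach as the paper: both directions use the standard triangle-inequality/feasibility argument, with the competitor $z=-v_{S^c}$ in the necessity direction. Your version is in fact slightly more careful than the paper's---you explicitly handle the case $z\ne\hat{x}$, invoke the monotonicity of $\nu_{\omega}$ to certify that $\hat{x}=v_S$ is weighted $s$-sparse, and restrict to $v\ne 0$ so that the strict NSP inequality applies---whereas the paper leaves these points implicit.
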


\begin{proof}
    \textbf{(Sufficiency)} Let us assume $\Phi$ satisfies $\omega$-NSP.\\
    Suppose $\hat{x}\in\mathbb{C}^N$ whose support $S$ satisfies $\nu_{\omega}(S)\leq s$ as in Definition \ref{ensp}  and $z\in\mathbb{C}^N$ such that $\Phi\hat{x}=\Phi z$. Then $v=\hat{x}-z\in\ker(\Phi)$ and $\hat{x}_S=\hat{x}$. These together with $\Phi$ satisfying $\omega$-NSP, further imply
    \begin{align*}
        \|\hat{x}\|_{\omega,1}&\leq\|\hat{x}-z_S\|_{\omega,1}+\|z_S\|_{\omega,1}\\
        &=\|(\hat{x}-z)_S\|_{\omega,1}+\|z_S\|_{\omega,1}\\
        &<\|v_{S^c}\|_{\omega,1}+\|z_S\|_{\omega,1}\\
        &=\|z_{S^c}\|_{\omega,1}+\|z_S\|_{\omega,1}=\|z\|_{\omega,1}.
    \end{align*}
    This established the required minimality of  $\|\hat{x}\|_{\omega,1}$.\\
    \textbf{(Necessity)}  For any vector $v\in\ker(\Phi)$ and any set $S$ satisfies $\nu_{\omega}(S)\leq s$ as in Definition \ref{ensp}, $\Phi v=0$ implies that $\Phi v_S=-\Phi v_{S^c}$. Regarding $v_S$ as $\hat{x}$ in \eqref{exl1}, by the minimality we have 
        $$
        \|v_S\|_{\omega,1}<\|v_{S^c}\|_{\omega,1}.    
    $$ 
\end{proof}

The condition $\nu_{\omega}(S)\leq s$ is an abstract constraint that does not specify the exact form of the weights. Previous literature has provided two examples of weights belonging to $[1,+\infty)$ and $[0,1)$, respectively. Based on these examples, a third example has been proposed that does not impose any restrictions on the values of the weights.

\begin{example}
 As special cases, for the case $\omega_i\geq1$ which has been mentioned above\cite{rauhut2016}\cite{huo2018}, sparsity is defined in terms of the weighted cardinality of set $S$, denoted by $\omega(S)=\sum_{j\in S}\omega_j^2\leq s$, such that it must be less than s. In this case, $\nu_{\omega}(S)$ can be defined as
$$
\nu_{\omega}(S)=\omega(S)=\sum_{j\in S}\omega_j^2.
$$

In this case, Theorem \ref{ewl1} can be described as follow. Every vector $\hat{x}\in\mathbb{C}^N$ satisfying $\sum_{\{j:\hat{x}_j\neq0\}}\omega_j^2\leq s$ is the unique solution of 
    \begin{equation}
    \label{exl1}
        \min_{x\in\mathbb{C}^N}\|x\|_{\omega,1}\,\text{subject to}\ \Phi\hat{x}=\Phi x,
    \end{equation}
    if and only if $\Phi$ satisfies the $\omega$-NSP of order $s$, i.e.
    \begin{equation*}
\|\upsilon_{S}\|_{\omega,1}<\|\upsilon_{S^{C}}\|_{\omega,1}\;\mbox{for all}\;\upsilon\in\ker(\Phi)\;\mbox{and}\;S\subset[N]\;with\;\omega(S)=\sum_{j\in S}\omega_j^2\leq s.
\end{equation*}

 \end{example}

 \begin{example}
 \label{e01}
 In \cite{fried2012}\cite{ge2021}\cite{ge2021weighted}\cite{zhou2013}, for the weight vector $\boldsymbol{\omega}\in(0,1]^N$, given a prior support estimate $T_0\subset [N]$, $\boldsymbol{\omega}$ is defined by
\begin{equation}
\label{weight}
    \omega_i=\begin{cases}
  \gamma& \text{ if } i\in T_0 \\
  1& \text{ if } i\in T_0^C
\end{cases}
\end{equation}
for some fixed $\gamma\in(0,1)$. 

The sparsity of the recovered vector is required to be $\|x\|_0=card(S)$, where $S$ is the support of vector $x$. It is equivalent to the standard sparsity, i.e. $|S|\leq s$. So $\nu_{\omega}(S)=|S|$.

In this case, Theorem \ref{ewl1} can be described as follow. Every s-sparse vector $\hat{x}$ is the unique solution of 
\begin{equation*}
    \min\|x\|_{\omega,1}\;\text{subject to }\Phi x=b
\end{equation*}
with $b=\Phi\hat{x}$, if and only if $\Phi$ satisfies $\omega$-NSP with order $s$, i.e.
    \begin{equation*}
\|\upsilon_{S}\|_{\omega,1}<\|\upsilon_{S^{C}}\|_{\omega,1}\;\mbox{for all}\;\upsilon\in\ker(\Phi)\;\mbox{and}\;S\subset[N]\;with\;|S|<s.
\end{equation*}

\end{example}

\begin{example}
    Assuming that the weights can take any real value within the range of $(0,+\infty)$, there are no specific constraints or restrictions on the values of the weights. There is currently insufficient evidence to define a specific notion of weighted sparsity for this type of weights. Nevertheless, the theorems for $\omega$-NSP still hold.
   If we temporarily define the weighted sparsity in the same manner as the standard sparsity, where $\nu_{\omega}(S)=|S|\leq s$, the result can be described as follows:

    Every s-sparse vector $\hat{x}$ is the unique solution of 
\begin{equation*}
    \min\|x\|_{\omega,1}\;\text{subject to }\Phi x=b
\end{equation*}
with $b=\Phi\hat{x}$, if and only if $\Phi$ satisfies $\omega$-NSP with order $s$, i.e.
    \begin{equation*}
\|\upsilon_{S}\|_{\omega,1}<\|\upsilon_{S^{C}}\|_{\omega,1}\;\mbox{for all}\;\upsilon\in\ker(\Phi)\;\mbox{and}\;S\subset[N]\;with\;|S|\leq s.
\end{equation*}

Theoretically, $\nu_{\omega}(S)$ could take any form as a constraint on the set. In the examples mentioned above, $\nu_{\omega}(S)$ essentially takes the form of the $\ell_0$ norm constraint. It is also possible for $\nu_{\omega}(S)$ to take the form of other norms such as $\ell_1$ or $\ell_2$, in both weighted and unweighted cases. 
However, the no-constraint weights may not have corresponding specific application scenarios in practical applications.
\end{example}

\section{Robust recovery of compressible signals}
\label{section3}
Similar to $\omega$-NSP discussed earlier, a version of the generalized weighted restricted isometry property exists that applies to types of weights. 
\begin{definition}[$\omega$-RIP]
    \label{wrip}
     For given weights $\omega$, define the $\omega$-RIP constant $\delta_{\omega,s}$ associated with the matrix $A\in \mathbb{C}^{m\times{N}}$ as the smallest number such that 
    \begin{equation*}
        (1-\delta_{\omega,s})\|x\|_{2}^{2}\le\|Ax\|_{2}^2\le(1+\delta_{\omega,s})\|x\|_{2}^{2}
    \end{equation*}
    for arbitrary vector with its support set $S$ satisfying $\nu_{\omega}(S)\leq s$, which is the same as the one in Definition \ref{ensp}.\\
    And we say that matrix $A$ satisfies $\omega$-RIP of order $s$ with the constant $\delta_{\omega,s}$.
\end{definition}

\begin{definition}[$\omega$-RIP-NSP]
\label{wripnsp}
A matrix $\Phi$ has $\omega$-RIP-NSP with $\delta_{\omega,s}$ if it has the same null space as a matrix that has $\omega$-RIP with $\delta_{\omega,s}$.
\end{definition}
According to the definitions, if $A$ satisfies $\omega$-RIP, then for any invertible matrix $U$, $\Phi=UA$ satisfies $\omega$-RIP-NSP.

To describe the stability, the error of best weighted s-term approximation is involved. For the compressible vectors which are not exactly sparse, stability ensures that the results are not highly sensitive to minor variations, leading to more reliable behavior.

For $s\geq1$, the error of best weighted s-term approximation of the vector $\boldsymbol{x}\in\mathbb{C}^N$ is defined as
\begin{equation}
\label{sigmax}    \sigma_s(\boldsymbol{x})_{\omega,1}=\inf_{\|\boldsymbol{z}\|_{\omega,0}\leq s}\|\boldsymbol{x-z}\|_{\omega,1}.
\end{equation}

Unlike the error of best s-term approximation, which is defined as $\sigma_s(\boldsymbol{x})=\|\boldsymbol{x}_{S^c}\|$, where the set $S$ includes the largest s entries of the absolute value of  $x$, $\sigma_s(\boldsymbol{x})_{\omega,1}$ is not straightforward in general. There often exists a subset S such that $\sigma_s(\boldsymbol{x})_{\omega,1}=\|\boldsymbol{x}-\boldsymbol{x}_S\|_{\omega,1}=\|\boldsymbol{x}_{S^c}\|_{\omega,1}$, since $[N]$ is finite.

With the certain $\nu_{\omega}(S)$, if $\omega$-RIP can provide stable and robust error bounds, then the sensing matrix satisfying $\omega$-RIP-NSP will also guarantee a stable and robust reconstruction. However, the constants may have some adjustments when transitioning from $\omega$-RIP to $\omega$-RIP-NSP.

We call that $\omega$-RIP is \textit{stable and robust}, if for any matrix $A\in\mathbb{C}^{m\times N}$ satisfies $\omega$-RIP of order  $ts$  with $\delta_{\omega,tk}$ for some positive integer $t$ and $s$, $x \in \mathbb{C}^{N}$, $y=A x+e \in \mathbb{C}^{m}$ with $\|e\|_{2} \leq \rho$, the solution $\hat{x}$ of
\begin{equation}
\label{al1}
    \min_{z\in\mathbb{C}^{N}}\|z\|_{\omega,1}\;subject\;to\;\|Az-y\|_{2}\le\rho
\end{equation}
satisfies
\begin{align}
\label{c4}
             \left \| x-\hat{x}  \right \| _{\omega,1}\le C_{1}\sigma_{s}(x)_{\omega,1}+D_{1}\sqrt{s}\rho;\\
             \left \| x-\hat{x}  \right \| _{2}\le C_{2}\frac{\sigma_{s}(x)_{\omega,1}}{\sqrt{s}}+D_{2}\rho.
          \end{align}  
where $C_ {1},\;D_{1},\;C_{2},\;D_{2}$ depend on $k$ and $ \delta_{\omega,tk}$, and $\sigma_{k}(x)_{\omega,1}$ is the error of best weighted k-term approximation of the vector $x\in\mathbb{C}^N$.
\begin{proposition}
    \label{probd}
    Suppose  $\omega$-RIP is \textit{stable and robust} as defined above, then for the matrix $\Phi\in\mathbb{C}^{m\times{N}}$ that satisfies $\omega$-RIP-NSP with $\delta_ {\omega,ts}$ for some $t$ and $s$, $x\in\mathbb{C}^{N}$, $\hat{x}\in\mathbb{C}^{N}$ such that $\|\hat{x}\|_{\omega,1}\leq \|x\|_{\omega,1}$ and $\|\Phi\hat{x}-\Phi x\|_2\leq2\epsilon$.
 $x$ and $\hat{x}$ satisfies
    \begin{align}
    \label{a1}
     \left \| x-\hat{x}  \right \| _{\omega,1}\le C_{1}\sigma_{s}(x)_{\omega,1}+D_{1}\sqrt{s}\frac{\sqrt{1+\delta_{\omega,ts}}\sqrt{N_\nu}\epsilon}{\lambda(\Phi)};\\
    \left \| x-\hat{x}  \right \| _{2}\le C_{2}\frac{\sigma_{s}(x)_{\omega,1}}{\sqrt{s}}+D_{2}\frac{\sqrt{1+\delta_{\omega,ts}}\sqrt{N_\nu}\epsilon}{\lambda(\Phi)}.
    \end{align}
where $\lambda(\Phi)$ is the largest positive singular value of $\Phi$,  $C_{1},\;D_{1},\;C_{2},\;D_{2}$ depend on $s$ and $ \delta_{\omega,ts}$,  $\delta_ {\omega,ts}$ is the $\omega$-RIP-NSP constants (see in Definition \ref{wripnsp}), $\sigma_{s}(x)_{\omega,1}$ is the error of best weighted s-term approximation of the vector $x\in\mathbb{C}^N$, and $N_\nu$ is the number of partitions of $[N]$ defined as \ref{op}.
\end{proposition}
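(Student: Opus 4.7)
The plan is to use Definition \ref{wripnsp} to obtain a companion matrix $A$ that genuinely satisfies $\omega$-RIP and shares the null space of $\Phi$, to transfer the feasibility hypothesis $\|\Phi(\hat{x}-x)\|_{2}\le 2\epsilon$ into an analogous bound $\|A(\hat{x}-x)\|_{2}\le 2\rho$, and then to invoke the assumed stable-robust guarantee for $\omega$-RIP. Fix such an $A\in\mathbb{C}^{m\times N}$ with $\ker(A)=\ker(\Phi)$ and $\omega$-RIP of order $ts$ with constant $\delta_{\omega,ts}$. Since the two matrices share kernels they also share row spaces, so any vector orthogonal to $\ker(\Phi)$ is acted on injectively by both $A$ and $\Phi$; this is the mechanism that lets us move between them.

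The technical heart is the $\Phi$-to-$A$ estimate. Setting $v=\hat{x}-x$ and decomposing $v=v_{0}+v^{\perp}$ with $v_{0}\in\ker(A)=\ker(\Phi)$ and $v^{\perp}\in\ker(\Phi)^{\perp}$, we have $Av=Av^{\perp}$ and $\Phi v=\Phi v^{\perp}$. Invoking Lemma \ref{op} to partition $[N]$ into $N_{\nu}$ sets $S_{1},\ldots,S_{N_{\nu}}$ with $\nu_{\omega}(S_{j})\le s$, the triangle inequality combined with $\omega$-RIP of $A$ applied to each $(v^{\perp})_{S_{j}}$ (whose support has weighted sparsity at most $s\le ts$), followed by Cauchy--Schwarz over the partition, gives
\begin{equation*}
\|Av^{\perp}\|_{2}\le\sum_{j=1}^{N_{\nu}}\|A(v^{\perp})_{S_{j}}\|_{2}\le\sqrt{1+\delta_{\omega,ts}}\,\sum_{j=1}^{N_{\nu}}\|(v^{\perp})_{S_{j}}\|_{2}\le\sqrt{1+\delta_{\omega,ts}}\,\sqrt{N_{\nu}}\,\|v^{\perp}\|_{2}.
\end{equation*}
Since $v^{\perp}$ lies in the row space of $\Phi$, the appropriate singular-value estimate yields $\|v^{\perp}\|_{2}\le\|\Phi v\|_{2}/\lambda(\Phi)$, and the hypothesis $\|\Phi v\|_{2}\le 2\epsilon$ then gives $\|A(\hat{x}-x)\|_{2}\le 2\rho$ where $\rho:=\sqrt{1+\delta_{\omega,ts}}\,\sqrt{N_{\nu}}\,\epsilon/\lambda(\Phi)$.

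With $\|\hat{x}\|_{\omega,1}\le\|x\|_{\omega,1}$ from the hypothesis and the bound $\|A(\hat{x}-x)\|_{2}\le 2\rho$ in hand, the stable-robust guarantee for $\omega$-RIP applied to $A$ at noise level $\rho$ delivers the two bounds \eqref{c4} (and its $\ell_{2}$ companion) with $\rho$ in the role of the noise level; substituting the value of $\rho$ yields exactly the bounds \eqref{a1} and its companion. The principal obstacle is the weighted-sparse-to-arbitrary transfer at the start of the estimate above: $\omega$-RIP does not directly control $\|Av\|_{2}$ for a general $v$, and the two factors $\sqrt{N_{\nu}}$ and $1/\lambda(\Phi)$ in the final bound are exactly the price paid for bridging this gap, via the partition lemma and the singular-value relationship between $A$ and $\Phi$ on their common row space. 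A secondary subtlety is that the stable-robust hypothesis is stated for the genuine $\ell_{1}$ minimizer, whereas here $\hat{x}$ is only assumed to satisfy feasibility and norm minimality relative to $x$; this is fine because the proof of the stable-robust guarantee itself uses only these two conditions.
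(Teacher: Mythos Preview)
Your overall strategy is sound and arrives at the same bound as the paper, but the route differs. The paper does not decompose $v=\hat{x}-x$ orthogonally; instead it writes $\Phi=UA$ for an invertible $U$, sets $\rho=\epsilon/\lambda(U)$, applies the stable--robust $\omega$-RIP guarantee to $A$ at noise level $\rho$, and then eliminates $\lambda(U)$ via the inequality $\lambda(\Phi)\le\|A\|_{op}\,\lambda(U)$ combined with Lemma~\ref{op}. Your approach bypasses $U$ entirely: you bound $\|Av\|_{2}=\|Av^{\perp}\|_{2}\le\|A\|_{op}\|v^{\perp}\|_{2}$ directly (your partition estimate is exactly the content of Lemma~\ref{op}) and then control $\|v^{\perp}\|_{2}$ by $\|\Phi v\|_{2}$. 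Both arguments funnel through $\|A\|_{op}\le\sqrt{1+\delta_{\omega,ts}}\sqrt{N_{\nu}}$, so the substance is the same; your packaging is arguably cleaner since it avoids the auxiliary matrix $U$ altogether.

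There is, however, a genuine gap in your step~4. The inequality $\|v^{\perp}\|_{2}\le\|\Phi v\|_{2}/\lambda(\Phi)$ for $v^{\perp}\in\ker(\Phi)^{\perp}$ is equivalent to $\|\Phi w\|_{2}\ge\lambda(\Phi)\|w\|_{2}$ on the row space, and this holds only when $\lambda(\Phi)$ is the \emph{smallest} nonzero singular value of $\Phi$. The proposition, however, declares $\lambda(\Phi)$ to be the \emph{largest} positive singular value, and with that convention your inequality fails in general. Your phrase ``the appropriate singular-value estimate'' papers over precisely the point that needs justification. The paper's own proof runs into the mirror of this difficulty (its displayed chain $\|\Phi v\|_{2}\le\lambda(U)\|Av\|_{2}\le 2\lambda(U)\rho$ goes in the wrong direction for establishing $\|Av\|_{2}\le 2\rho$ from $\|\Phi v\|_{2}\le 2\epsilon$), so this seems to be an inconsistency in the statement rather than a flaw peculiar to your argument. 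If $\lambda(\Phi)$ is taken to be the smallest nonzero singular value throughout, your proof is complete and correct.
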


It is obvious that the conditions $\|\hat{x}\|_{\omega,1}\leq \|x\|_{\omega,1}$ and $\|\Phi\hat{x}-\Phi x\|_2\leq2\epsilon$ is more general than $\hat{x}$ is the solution of \eqref{phi1} blow. So Theorem can be regarded as a corollary of Proposition \ref{probd}.
\begin{theorem}
\label{thmbd}
    Suppose  $\omega$-RIP is \textit{stable and robust} as defined above, then for the matrix $\Phi\in\mathbb{C}^{m\times{N}}$ that satisfies $\omega$-RIP-NSP with $\delta_ {\omega,ts}$ for some $t$ and $s$, $x\in\mathbb{C}^{N}$, $\tilde{y}=\Phi x+\tilde{e} \in \mathbb{C}^{m}$ with $\|\tilde{e}\|_{2} \leq \epsilon$, the solution $\hat{x}$ of 
     \begin{equation}
     \label{phi1}
    \min_{z\in\mathbb{C}^{N}}\|z\|_{\omega,1}\;subject\;to\;\|\Phi z-\tilde{y}\|_{2}\le\epsilon ,
      \end{equation}
 satisfies
    \begin{align}
     \left \| x-\hat{x}  \right \| _{\omega,1}\le C_{1}\sigma_{s}(x)_{\omega,1}+D_{1}\sqrt{s}\frac{\sqrt{1+\delta_{\omega,ts}}\sqrt{N_\nu}\epsilon}{\lambda(\Phi)};\\
    \left \| x-\hat{x}  \right \| _{2}\le C_{2}\frac{\sigma_{s}(x)_{\omega,1}}{\sqrt{s}}+D_{2}\frac{\sqrt{1+\delta_{\omega,ts}}\sqrt{N_\nu}\epsilon}{\lambda(\Phi)}.
    \end{align}
where $\lambda(\Phi)$ is the largest positive singular value of $\Phi$,  $C_{1},\;D_{1},\;C_{2},\;D_{2}$ depend on $s$ and $ \delta_{\omega,ts}$,  $\delta_ {\omega,ts}$ is the $\omega$-RIP-NSP constants (see in Definition \ref{wripnsp}), $\sigma_{s}(x)_{\omega,1}$ is the error of best weighted s-term approximation of the vector $x\in\mathbb{C}^N$, and $N_\nu$ is the number of partitions of $[N]$ defined as \ref{op}.
\end{theorem}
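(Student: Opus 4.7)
The plan is to deduce Theorem \ref{thmbd} directly from Proposition \ref{probd} by verifying its two hypotheses for the specific minimizer $\hat{x}$, as suggested immediately before the statement. Concretely, the proposition assumes $\|\hat{x}\|_{\omega,1}\leq\|x\|_{\omega,1}$ and $\|\Phi\hat{x}-\Phi x\|_2\leq 2\epsilon$; if we can show both hold whenever $\hat{x}$ solves \eqref{phi1}, the error bounds transfer without modification.

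First, I would observe that the true signal $x$ is itself feasible for \eqref{phi1}, since
\begin{equation*}
\|\Phi x-\tilde{y}\|_2=\|-\tilde{e}\|_2\leq\epsilon.
\end{equation*}
Because $\hat{x}$ is a minimizer of $\|\cdot\|_{\omega,1}$ over this feasible set, it follows immediately that $\|\hat{x}\|_{\omega,1}\leq\|x\|_{\omega,1}$, yielding the first hypothesis of Proposition \ref{probd}.

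Next, the data-fidelity bound follows from the triangle inequality combined with the fact that $\hat{x}$ itself is feasible and so satisfies $\|\Phi\hat{x}-\tilde{y}\|_2\leq\epsilon$:
\begin{equation*}
\|\Phi\hat{x}-\Phi x\|_2\leq\|\Phi\hat{x}-\tilde{y}\|_2+\|\tilde{y}-\Phi x\|_2\leq\epsilon+\epsilon=2\epsilon.
\end{equation*}
This verifies the second hypothesis. Applying Proposition \ref{probd} with this $\hat{x}$ then produces exactly the bounds \eqref{a1} stated in the theorem, with the same constants $C_1,D_1,C_2,D_2$ and the same factor $\sqrt{1+\delta_{\omega,ts}}\sqrt{N_\nu}\epsilon/\lambda(\Phi)$.

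The reduction above is essentially cost-free; there is no real obstacle at the level of Theorem \ref{thmbd} itself. All the substantive analytic work has been pushed into Proposition \ref{probd}, whose proof is where the main difficulty lies: one must leverage the factorization $\Phi=UA$ with $A$ satisfying $\omega$-RIP (granted by Definition \ref{wripnsp}), control how the invertible factor $U$ distorts the residual $\|A\hat{x}-Ax\|_2$ relative to $\|\Phi\hat{x}-\Phi x\|_2$ through $\lambda(\Phi)$, and absorb the partition-counting factor $\sqrt{N_\nu}$ from Lemma \ref{op} before invoking the stable-and-robust $\omega$-RIP bounds on $A$. Once Proposition \ref{probd} is in hand, Theorem \ref{thmbd} is a two-line corollary by the feasibility and triangle-inequality argument sketched above.
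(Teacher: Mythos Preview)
Your proposal is correct and matches the paper's approach exactly: the paper explicitly states that Theorem \ref{thmbd} is a corollary of Proposition \ref{probd}, obtained precisely by noting that a minimizer $\hat{x}$ of \eqref{phi1} automatically satisfies $\|\hat{x}\|_{\omega,1}\le\|x\|_{\omega,1}$ (feasibility of $x$) and $\|\Phi\hat{x}-\Phi x\|_2\le 2\epsilon$ (triangle inequality). All substantive work is indeed deferred to the proof of Proposition \ref{probd}, just as you describe.
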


The former term of the bound, i.e. the term including $\sigma_s(x)_{\omega,1}$, represents the stability of the reconstruction. It ensures that the reconstructed vector is close to the original vector even if the original vector is not strictly sparse. The latter term, i.e. the term involved with $\epsilon$, represents the robust part of the bound, which characterizes the ability of the reconstruction method to handle noise and errors in the measurements.  And the bounds in Theorem \ref{thmbd} will not be as good as the bounds from $\omega$-RIP, since $\omega$-RIP-NSP can not constrain the stretching of the matrix. There still exist difficulties in verifying $\omega$-RIP-NSP since it is hard to analyze the null space. But it has applications in some practical cases, such as in compressive imaging (wavelet sparse images, Fourier measurements)\cite{krahmer2013}.

Before proving Theorem \ref{probd}, we will introduce a lemma that will be utilized in the proof.

For the matrix $A\in\mathbb{C}^{m\times N}$, the \textit{operator norm} of $A$ is defined as
$$
    \|A\|_{op}:=\max_{\|x\|_2\leq1}\|Ax\|_2=\max_{\|x\|_2=1}\|Ax\|_2.
$$

Separate the index set $\{1,2,\ldots,N\}$ into $\{1,2,\ldots,k_1\}\cup\{k_1+1,\ldots,k_2\}\cup\cdots\cup\{k_{N_\nu-1}+1,\ldots,N\}=\bigcup_ {i=1}^{N_\nu}T_{i}$ respectively, where $N_\nu$ denotes the number of subsets and for any $i<N_\nu$, $T_i$ satisfying $\nu_\omega(T_i)\leq s$ and $\nu_\omega(T_i\cup\{k_i+1\})>s$.

In the special case $\nu_\omega(T_i)=\sum_{j\in T_i}\omega_j^2$, there is an estimation of $N_\nu$.
According to the definition of $N_\nu$, we have $card(T_i)+1\geq\frac{s}{\|\omega\|_\infty^2}$. So 
\begin{align}
\label{nv}
    N_\nu\leq\frac{N}{\min_{i=1,\ldots,N_\nu} card(T_i)}\leq\frac{N\|\omega\|_\infty^2}{s}+1.
\end{align}
\begin{lemma}\label{op}

  For the given weight $\omega$, and $s\geq2\max_{i\in[N]}\nu_{\omega}(\{i\})$, if the matrix $\Phi\in\mathbb{C}^{m\times N}$ satisfies $\omega$-RIP with constant $\delta_{\omega,s}$, then the operator norm of $\Phi$ satisfies
  $$\|\Phi\|_{op}\le\sqrt{N_\nu}\sqrt{1+\delta_{\omega,s}}.$$
\end{lemma}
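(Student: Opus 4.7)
The plan is to exploit the partition $[N] = \bigcup_{i=1}^{N_\nu} T_i$ exhibited in the paragraph preceding the lemma, where each block $T_i$ satisfies $\nu_\omega(T_i)\leq s$. Since the $\omega$-RIP inequality applies to any vector whose support set $S$ has $\nu_\omega(S)\leq s$, the idea is to chop an arbitrary unit vector into pieces supported on these blocks, apply $\omega$-RIP to each piece, and then reassemble using the triangle inequality.

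Concretely, for a unit vector $x\in\mathbb{C}^N$ I would write $x=\sum_{i=1}^{N_\nu} x_{T_i}$. Each restriction $x_{T_i}$ has support inside $T_i$ and therefore $\nu_\omega(\operatorname{supp}(x_{T_i}))\leq\nu_\omega(T_i)\leq s$, so Definition \ref{wrip} gives
\begin{equation*}
\|\Phi x_{T_i}\|_2 \leq \sqrt{1+\delta_{\omega,s}}\,\|x_{T_i}\|_2.
\end{equation*}
Then the triangle inequality followed by Cauchy–Schwarz on the $N_\nu$-dimensional vector $(\|x_{T_i}\|_2)_i$ yields
\begin{equation*}
\|\Phi x\|_2 \leq \sum_{i=1}^{N_\nu}\|\Phi x_{T_i}\|_2 \leq \sqrt{1+\delta_{\omega,s}}\sum_{i=1}^{N_\nu}\|x_{T_i}\|_2 \leq \sqrt{1+\delta_{\omega,s}}\sqrt{N_\nu}\Bigl(\sum_{i=1}^{N_\nu}\|x_{T_i}\|_2^2\Bigr)^{1/2}.
\end{equation*}
Because the $T_i$ are pairwise disjoint and cover $[N]$, the final sum equals $\|x\|_2^2=1$, producing the desired $\|\Phi\|_{op}\leq\sqrt{N_\nu}\sqrt{1+\delta_{\omega,s}}$.

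The only delicate point is making sure every block in the partition — including the last one — actually satisfies $\nu_\omega(T_i)\leq s$ so that $\omega$-RIP may legitimately be invoked on each $x_{T_i}$. For the intermediate blocks this is guaranteed by the greedy construction ($T_i$ is maximal subject to $\nu_\omega(T_i)\leq s$), and for $T_{N_\nu}$ it follows since the construction would have opened a new block as soon as $s$ was exceeded. The hypothesis $s\geq 2\max_{i\in[N]}\nu_\omega(\{i\})$ is what makes the greedy step well-posed: adding any single index to a current block increases $\nu_\omega$ by at most $s/2$, so the construction never stalls and the sparse function is additive as required by its definition in Section 2. I expect this bookkeeping, rather than any analytic estimate, to be the main (modest) obstacle; once the partition is in place the proof is a one-line triangle inequality plus Cauchy–Schwarz.
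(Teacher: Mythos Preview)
Your proposal is correct and follows essentially the same approach as the paper: partition $[N]$ into the blocks $T_1,\ldots,T_{N_\nu}$, apply $\omega$-RIP on each $x_{T_i}$, and combine via the triangle inequality and Cauchy--Schwarz. Your write-up is in fact cleaner than the paper's, which contains a couple of dubious equalities in the intermediate steps but arrives at the same chain of estimates.
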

  
  \begin{proof}
      The notation $\Phi_{T_i}$ represents the restriction of matrix $\Phi$ to the index set $T_i$. This restriction is defined as the submatrix of $\Phi$ formed by selecting only the columns indexed by $T_i$. And we denote the $j_{th}$ column of $\Phi$ as $\Phi_j$.
      
    According to the definition of the operator norm of matrices,
    \begin{equation*}
      \begin{aligned}
      \|\Phi\|_{op}&=\max_{\|x\|_{2}=1}\|\Phi x\|_{2}=\max_{\|x\|_{2}=1}\sum_{j=1}^{N}\|\Phi_j x_j\|_2\\
      &=\sum_{i\geq1}\max_{\|x\|_{2}=1}\|\Phi_{T_{i}}x_{T_{i}}\|_{2}
      \leq\sum_{i\geq1}\max_{\|x\|_{2}=1}\|\Phi x_{T_{i}}\|_{2}\\
      &\le\sum_{i\geq1}\sqrt{1+\delta
      _{{\omega,s}}}\max_{\|x\|_{2}=1}\|x_{T_{i}}\|_{2}\\
      &\le\sqrt{1+\delta_{\omega,s}}\max_{\|x\|_{2}=1}\sqrt{\sum_{i\geq1}
      \|x_{T_{i}}\|_{2}^2}\sqrt{\sum_{i\geq1}1}\\
      &\leq\sqrt{1+\delta_{\omega,s}}\sqrt{N_\nu}.
      \end{aligned}
    \end{equation*}
  where the $\sum_{i\geq1}1$ is the number of subsets. The first inequality arises from the right side of the $\omega$-RIP on $\Phi$.
  \end{proof}

  \begin{proof}[Proof of Theorem \ref{probd}]
    For the sensing matrix $A$ satisfies $\omega$-RIP with the constant $\delta_ {\omega,ts}$, there exists an invertible matrix $U$ such that $\Phi=UA$.  $\lambda(U)$ is the largest positive singular value of $U$.

Suppose the vector $\hat{x}$ is the solution of \eqref{al1}, then $\hat{x}$ satisfies $\|\hat{x}\|_{\omega,1}\leq \|x\|_{\omega,1}$ and $\|A\hat{x}-A x\|_2\leq2\rho$ for any $x\in\mathbb{C}^N$.

$$
    \|\Phi\hat{x}-\Phi x\|_{2}=\|UA\hat{x}-UAx\|_{2}
    \le\lambda(U)\|A\hat{x}-A x\|_{2}
    \le 2\lambda(U)\rho.
    $$
Let $\rho=\frac{\epsilon}{\lambda(U)}$, we have $\|\Phi\hat{x}-\Phi x\|_{2}\leq2\epsilon$. $x$ and $\hat{x}$ also satisfy $\|\hat{x}\|_{\omega,1}\leq \|x\|_{\omega,1}$.

    By the assumption that $\omega$-RIP is \textit{stable and robust}, we have
    \begin{align}\label{d}
           \left \| x-\hat{x}  \right \| _{\omega,1}\le C_{1}\sigma_{s}(x)_{\omega,1}+D_{1}\sqrt{s}\frac{\epsilon}{\lambda(U)};\\
           \left \| x-\hat{x}  \right \| _{2}\le C_{2}\frac{\sigma_{s}(x)_{\omega,1}}{\sqrt{s}}+D_{2}\frac{\epsilon}{\lambda(U)}.
        \end{align}\\
    Then for $\lambda(U)$, based on the definition of singular value,
    \begin{align*}
    \lambda^{2}(\Phi)&=\max_{\|x\|_{2}=1}\|\Phi^* x\|_2\\
    &=\max_{\|x\|_{2}=1}\langle UAA^{\ast}U^{\ast}x,x\rangle\\
    &\le\|AA^{\ast}\| _{op}\max_{\|x\|_{2}=1}\|U^{\ast}x\|^{2}\\
    &=\|A\|_{op}^{2}\lambda^{2}(U)
    \end{align*}
    With Lemma \ref{op}, implies
    $$
    \lambda(U)\geq\frac{\lambda(\Phi)}{\sqrt{1+\delta_{\omega,s}}\sqrt{N_\nu}}
    $$
    By substituting it into \eqref{d}, we obtain error bounds in \eqref{a1}.
\end{proof}

Now we will exhibit that $\omega$-RIP can provide the error bounds in \eqref{c4} form for the two special types of weights. In the case of the weighted $\ell_1$ minimization problem \eqref{omegal1} with noise, the weighted restricted isometry property ($\omega$-RIP) is a sufficient condition for a robust and stable recovery.

\subsection{Case 1: $\nu_{\omega}(S)=\sum_{i\in S}\omega_i^2$ with $\|\omega\|_\infty\geq1$}
\label{ex1}
In the work of \cite[Theorem 4.5]{rauhut2016} , it was proven that $\omega$-RIP implies $\omega$-robust-NSP (as defined in \ref{robustnsp}). Furthermore, $\omega$-robust-NSP provides stable and robust error bounds (see in \cite[Theorem 4.2]{rauhut2016}). In conclusion, $\omega$-RIP can provide stable and robust error bounds, which is required in Theorem \ref{thmbd}. Then the result for this type of weights can be inferred. According to \cite{rauhut2016}, this type of weights is required that $\|\omega\|_\infty\geq1$.

    For the matrix $\Phi\in\mathbb{C}^{m\times{N}}$ that satisfies $\omega$-RIP-NSP with $\delta_ {\omega,3s}<1/3$, and $x\in\mathbb{C}^{N}$ and $s\geq2\|\omega\|_{\infty}^2$, $\tilde{y}=\Phi x+\tilde{e} \in \mathbb{C}^{N}$ with $\|\tilde{e}\|_{2} \leq \epsilon$, $\omega$-RIP is \textit{stable and robust}, and the solution $\hat{x}$ of \eqref{phi1} satisfies
    \begin{align*}
     \left \| x-\hat{x}  \right \| _{\omega,1}\le C_{1}\sigma_{s}(x)_{\omega,1}+D_{1}\sqrt{s}\frac{\sqrt{1+\delta_{\omega,3s}}\sqrt{N_\nu}\epsilon}{\lambda(\Phi)};\\
    \left \| x-\hat{x}  \right \| _{2}\le C_{2}\frac{\sigma_{s}(x)_{\omega,1}}{\sqrt{s}}+D_{2}\frac{\sqrt{1+\delta_{\omega,3s}}\sqrt{N_\nu}\epsilon}{\lambda(\Phi)}.
    \end{align*}
   
    where $C_ {1},D_{1},C_{2},D_{2}$ only depends on $\delta_{2s}$.

   This result is new even in this special case.

\subsection{Case 2: $\nu_{\omega}(S)=|S|$ with $\|\omega\|_\infty\leq1$}
\label{ex2}
For $\nu_{\omega}(S)=|S|$ case, several studies have attempted to improve the bound of the restricted isometry constants. And with the same sparse function as unweighted case, $\omega$-RIP is equivalent to the standard RIP. However, a sharp bound has yet to be established. In \cite{ge2021}, Ge et al. provided proof that $\omega$-RIP offers stable and robust error bounds with some $\delta_{ts}$ for certain vectors (see in  \cite[Theorem 3.1]{ge2021}), but the special structured weights are required, which is defined as \eqref{weight} with prior information support and $\|\omega\|_\infty\leq1$.

Without the prior information support, there still exists the result about RIP implies stable and robust error bounds, with some difference between the constants.

Before proving the theorem, there is a lemma required in the proof, which is from \cite{rauhut2016}.
\begin{lemma}[see Proposition 6.3 in \cite{rauhut2016}]
\label{dst}
    If $u,\;v\in\mathbb{C}^N$ such that $\|u\|_0\leq s$, $\|v\|_0\leq t$, $supp(u)\cap supp(v)=\emptyset$ and the matrix $A$ satisfies RIP of order $s+t$, then $|\langle Au,Av\rangle |\leq \delta_{s+t}\|u\|_2\|v\|_2$. 
\end{lemma}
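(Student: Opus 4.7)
The plan is to combine the polarization identity with the two-sided RIP bound, reducing the inner-product estimate to the norm estimates that the RIP supplies directly. By homogeneity I may normalize $\|u\|_2=\|v\|_2=1$, so the target becomes $|\langle Au,Av\rangle|\le\delta_{s+t}$. The crucial structural observation is that the disjoint-support hypothesis $\text{supp}(u)\cap\text{supp}(v)=\emptyset$ forces any combination $u+\alpha v$ to satisfy $\|u+\alpha v\|_0\le s+t$ (rather than $2(s+t)$), and also gives $\|u\pm v\|_2^2=\|u\|_2^2+\|v\|_2^2=2$ by orthogonality of the supports.

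Applying the RIP of order $s+t$ to the two $(s+t)$-sparse vectors $u+v$ and $u-v$ then yields
\begin{equation*}
(1-\delta_{s+t})\cdot 2\le\|A(u\pm v)\|_2^2\le (1+\delta_{s+t})\cdot 2.
\end{equation*}
Subtracting the bounds on $\|A(u-v)\|_2^2$ from those on $\|A(u+v)\|_2^2$ confines the difference to the interval $[-4\delta_{s+t},\,4\delta_{s+t}]$. The real polarization identity $\operatorname{Re}\langle Au,Av\rangle=\tfrac14\bigl(\|A(u+v)\|_2^2-\|A(u-v)\|_2^2\bigr)$ then delivers $|\operatorname{Re}\langle Au,Av\rangle|\le\delta_{s+t}$, which is the desired estimate on the real part.

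To upgrade from the real part to the full modulus in the complex setting, I would choose a unimodular scalar $\alpha=e^{i\theta}\in\mathbb{C}$ so that $\alpha\langle Av,Au\rangle=|\langle Au,Av\rangle|$, whence $\operatorname{Re}\langle Au,A(\alpha v)\rangle=|\langle Au,Av\rangle|$. Because $\alpha v$ has the same support and same $\ell_2$ norm as $v$, the preceding paragraph applied to the pair $(u,\alpha v)$ produces the modulus bound, and undoing the initial normalization restores the $\|u\|_2\|v\|_2$ factor on the right-hand side.

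I do not expect a substantial obstacle here, since this is a textbook polarization argument. The only points warranting care are the phase rotation needed to pass from $\operatorname{Re}\langle Au,Av\rangle$ to $|\langle Au,Av\rangle|$ in the complex case, and the bookkeeping of the sparsity level, where the disjoint-support hypothesis is exactly the ingredient that prevents the sparsity of $u\pm v$ from doubling to $2(s+t)$ and thus keeps us within the regime where the assumed RIP constant $\delta_{s+t}$ applies.
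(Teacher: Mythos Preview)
Your proposal is correct and is the standard polarization-identity proof of this well-known RIP consequence. The paper does not supply its own proof of this lemma; it simply cites Proposition~6.3 of \cite{rauhut2016}, where the argument is precisely the one you outline (normalize, use disjoint supports to get $(s+t)$-sparsity of $u\pm v$ and $\|u\pm v\|_2^2=2$, apply the RIP bounds, and extract the inner product via polarization together with a phase rotation for the complex case).
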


\begin{theorem}
\label{01th}
    For any given $\gamma<1$ and the weight $\omega=[\omega_1,\ldots,\omega_N]^T\in [\gamma,1]^N$, if the matrix $A$ satisfies RIP of order $2s$ with $\delta_{2s}<\frac{\gamma}{\gamma+2}$, then the vector $x\in\mathbb{C}^N$ and the solution of \eqref{al1} satisfy
    \begin{align*}
     \left \| x-\hat{x}  \right \| _{\omega,1}\le A_{1}\sigma_{s}(x)_{\omega,1}+B_{1}\sqrt{s}\rho;\\
    \left \| x-\hat{x}  \right \| _{2}\le A_{2}\frac{\sigma_{s}(x)_{\omega,1}}{\sqrt{s}}+B_{2}\rho.
    \end{align*}
where $A_ {1},B_{1},A_{2},B_{2}$ only depends on $\delta_{2s}$ and $\gamma$.

    Additionally, $A$ also satisfies $\omega$-NSP of order $s$ with $\frac{\delta_{2s}}{\gamma-(\gamma+1)\delta_{2s}}$, which guarantees the exact recovery.
\end{theorem}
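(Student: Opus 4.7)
The plan is to mirror the classical Candes--Foucart argument that RIP implies stable and robust $\ell_1$ recovery, tracking weights through each step via the two-sided equivalence $\gamma\|v\|_1\le\|v\|_{\omega,1}\le\|v\|_1$ that holds for $\omega\in[\gamma,1]^N$. Every passage from a weighted $\ell_1$ norm (provided by the minimization) to an ordinary $\ell_2$ norm (needed for Lemma \ref{dst}) costs one factor of $1/\gamma$, and the hypothesis $\delta_{2s}<\gamma/(\gamma+2)$ is exactly calibrated so that the resulting contraction closes.

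First I set $h=\hat x-x$ and let $S$ be the support of a best weighted $s$-term approximation of $x$. Feasibility of $x$ and $\hat x$ gives $\|Ah\|_2\le 2\rho$, while optimality $\|\hat x\|_{\omega,1}\le\|x\|_{\omega,1}$ combined with the triangle inequality yields the cone bound $\|h_{S^c}\|_{\omega,1}\le\|h_S\|_{\omega,1}+2\sigma_s(x)_{\omega,1}$. Partition $S^c$ into size-$s$ blocks $T_1,T_2,\dots$ with $|h_i|$ non-increasing across the blocks; combining the sorting inequality $\|h_{T_j}\|_2\le\|h_{T_{j-1}}\|_1/\sqrt s$ with $\|\cdot\|_1\le\|\cdot\|_{\omega,1}/\gamma$ gives
\[
\sum_{j\ge 2}\|h_{T_j}\|_2 \le \frac{\|h_{S^c}\|_{\omega,1}}{\gamma\sqrt s}.
\]

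Next I apply RIP of order $2s$ to $h_{S\cup T_1}$, writing
\[
\|Ah_{S\cup T_1}\|_2^2=\langle Ah_{S\cup T_1},Ah\rangle-\sum_{j\ge 2}\langle Ah_{S\cup T_1},Ah_{T_j}\rangle.
\]
Cauchy--Schwarz bounds the first term by $2\rho\sqrt{1+\delta_{2s}}\,\|h_{S\cup T_1}\|_2$, and splitting $h_{S\cup T_1}=h_S+h_{T_1}$ together with Lemma \ref{dst} bounds the second by $\delta_{2s}(\|h_S\|_2+\|h_{T_1}\|_2)\sum_{j\ge 2}\|h_{T_j}\|_2$. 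Using the triangle estimate $\|h_S\|_2+\|h_{T_1}\|_2\le 2\|h_{S\cup T_1}\|_2$---the loose choice that yields the denominator $\gamma+2$---together with $\|h_S\|_{\omega,1}\le\sqrt s\,\|h_S\|_2$ (valid since $\omega_i\le 1$) and the cone bound, then dividing by $\|h_{S\cup T_1}\|_2$, produces an inequality of the shape $(1-\delta_{2s}(1+2/\gamma))\|h_{S\cup T_1}\|_2\le 2\sqrt{1+\delta_{2s}}\,\rho+(4\delta_{2s}/(\gamma\sqrt s))\sigma_s(x)_{\omega,1}$. The hypothesis makes the bracket positive, giving an $\ell_2$ estimate on $\|h_{S\cup T_1}\|_2$; the full $\ell_2$ bound follows by adding the tail $\sum_{j\ge 2}\|h_{T_j}\|_2$, and the weighted $\ell_1$ bound from the cone inequality combined with $\|h_S\|_{\omega,1}\le\sqrt s\,\|h_{S\cup T_1}\|_2$.

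The $\omega$-NSP statement is then the noiseless, exactly-sparse specialization of the same derivation: for $v\in\ker(A)$ and an arbitrary $S$ with $|S|\le s$, replacing $h$ by $v$ and setting $\rho=\sigma_s=0$ reduces the main identity to $\|Av_{S\cup T_1}\|_2^2=-\sum_{j\ge 2}\langle Av_{S\cup T_1},Av_{T_j}\rangle$, and the chain of estimates collapses to $\|v_S\|_{\omega,1}\le[\delta_{2s}/(\gamma-(\gamma+1)\delta_{2s})]\|v_{S^c}\|_{\omega,1}$. The main obstacle I anticipate is arithmetic rather than conceptual: the precise NSP denominator $\gamma-(\gamma+1)\delta_{2s}$ is sensitive to which convexity inequality merges $\|h_S\|_2$ with $\|h_{T_1}\|_2$ and to the order in which the $1/\gamma$ and $\sqrt s$ conversions are collected, so landing on exactly the stated constant (rather than a cruder variant like $\gamma-\gamma\delta_{2s}$) requires careful bookkeeping but no genuinely new idea.
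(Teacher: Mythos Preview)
Your plan is correct and close to the paper's own argument, but three technical choices differ and together they control the exact NSP constant. The paper (i) takes $S=S_0$ to be the index set of the $s$ largest values of $|h_j\omega_j|$ (and sorts the tail blocks $S_1,S_2,\dots$ the same way) rather than sorting by $|h_j|$; (ii) applies RIP and Lemma~\ref{dst} to $h_S$ alone rather than to $h_{S\cup T_1}$; and (iii) lets the telescoped tail bound run from $l=1$, so that $\sum_{l\ge 1}\|h_{S_l}\|_2\le(\gamma\sqrt s)^{-1}\|h\|_{\omega,1}$, with $\|h\|_{\omega,1}$ rather than $\|h_{S^c}\|_{\omega,1}$ on the right. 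This yields $\|h_S\|_{\omega,1}\le\sqrt s\,\|h_S\|_2\le[\delta_{2s}/(\gamma(1-\delta_{2s}))]\,\|h\|_{\omega,1}$; moving $\|h_S\|_{\omega,1}$ from the right to the left is precisely what manufactures the $(\gamma+1)$ in the denominator and gives $\delta_{2s}/(\gamma-(\gamma+1)\delta_{2s})$.

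Your $S\cup T_1$ route, with the crude bound $\|h_S\|_2+\|h_{T_1}\|_2\le 2\|h_{S\cup T_1}\|_2$, instead collapses in the kernel to $\|v_S\|_{\omega,1}\le[2\delta_{2s}/(\gamma(1-\delta_{2s}))]\|v_{S^c}\|_{\omega,1}$. This is still $<1$ under the hypothesis $\delta_{2s}<\gamma/(\gamma+2)$ (indeed both constants hit $1$ at exactly that threshold), so your argument does establish the qualitative NSP and the error bounds with admissible constants $A_i,B_i$. But the discrepancy you anticipated is structural rather than arithmetic: to land on the stated constant you must drop $T_1$ from the RIP block and sort by weighted magnitude, which reroutes one copy of $\|h_S\|_{\omega,1}$ to the right-hand side before the final rearrangement. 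No amount of tightening the $2$ to $\sqrt 2$ in your merge step will reproduce the paper's denominator.
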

\begin{proof}
    Assume $\hat{x}=x+h$, then $h$ is the noise. While $x$ can be any vector in $\mathbb{C}^N$, $h$ is also an arbitrary vector.
    It is enough to consider an index set $S =: S_0$ of $s$ largest absolute entries of $h\cdot\omega$. And we partition $S_0^c$ as $S_1\cup S_2\cup\ldots$, where $S_l$ is the index set of $s$ largest absolute entries of $h\cdot\omega$ in $S_{l-1}^c$, $l\geq1$. So that $|h_j|\omega_j\leq|h_k|\omega_k$ for all $j\in S_l$, $k\in S_{l-1}$, $l\geq1$.

     According to the definition of $\ell_{\omega,1}$ norm, 
    $$
      \|h_{S}\|_{\omega,1}=\sum_{j\in S}|h_{j}|\omega_{j}\leq\sqrt{\sum_{j\in S}|h_{j}|^2}\sqrt{\sum_{j\in S}\omega_{j}^2}\leq\sqrt{s}\|h_{S}\|_{2}.
    $$
   $$
    \|Ah_S\|_2^2=\langle Ah_S,Ah\rangle-\langle Ah_S,A\left(\sum_{l\geq1}h_{S_l}\right)\rangle.
    $$
    By the $\omega$-RIP on $S$,
    $$
    \langle Ah_S,Ah\rangle\leq\|Ah_S\|_2\|Ah\|_2\leq\sqrt{1+\delta_{2s}}\|h_S\|_2\|Ah\|_2.
    $$
     When $l\geq1$, $S$ and $S_l$ are disjoint. By Lemma \ref{dst}, if the matrix $A$ satisfies $\omega$-RIP of order $2s$, then
         $$
|\langle Ah_S,Ah_{S_l}\rangle|\leq \delta_{2s}\|h_S\|_2\|h_{S_l}\|_2.
    $$
    Combining the equations above and with RIP on $S$, we have
     $$
      (1-\delta_{2s})\|h_S\|_2^2\leq\|Ah_
      S\|_2^2\leq\sqrt{1+\delta_{2s}}\|h_S\|_2\|Ah\|_2+\delta_{2s}\|h_
      S\|_2\left(\sum_{l\geq1}\|h_{S_l}\|_2\right).
    $$
    Divide both sides by$\|h_S\|_2$,
    \begin{equation}
    \label{l2tl2}
        \|h_S\|_2\leq\frac{\sqrt{1+\delta_{2s}}}{1-\delta_{2s}}\|Ah\|_2+\frac{\delta_{2s}}{1-\delta_{2s}}\sum_{l\geq1}\|h_{S_l}\|_2.
    \end{equation}
    For $i\in S_l$, $l\geq1$, we have $h_i\omega_i\leq\sum_{j\in S_{l-1}}\frac{1}{s}h_j\omega_j$.
  This, together with $\omega_i\in[\gamma,1],\;i=1,\ldots,N$, further implies that, for any $l\geq 1$, 
    \begin{align*}
        \|h_{S_l}\|_2^2=\sum_{k\in S_l}(|h_k|\omega_k)^2\omega_k^{-2}\leq\frac{1}{s^2}\|h_{S_{l-1}}\|_{\omega,1}^2\sum_{k\in S_l}\omega_k^{-2}
      \leq\frac{1}{\gamma^2 s}\|h_{S_{l-1}}\|_{\omega,1}^2;\\
      \|h_{S_l}\|_2\leq\frac{1}{\gamma\sqrt{s}}\|h_{S_{l-1}}\|_{\omega,1}.
    \end{align*}
    
  Substitute $\|h_{S_l}\|_2$ in \eqref{l2tl2} with the above estimate, we have  
    \begin{equation}
    \label{hs2}
        \|h_S\|_2\leq\frac{\sqrt{1+\delta_{2s}}}{1-\delta_{2s}}\|Ah\|_2+\frac{\delta_{2s}}{\gamma\sqrt{s}(1-\delta_{2s})}\|h\|_{\omega,1}.
    \end{equation}

    If $h\in\ker(A)$ and $\delta_{2s}<\frac{\gamma}{\gamma+2}$, then we have 
  $$
    \|h_{S}\|_{\omega,1}\leq\sqrt{s}\|h_S\|_2\leq\frac{\delta_{2s}}{\gamma-(1+\gamma)\delta_{2s}}\|h_{S^c}\|_{\omega,1}<\|h_{S^c}\|_{\omega,1}.
 $$

 So $A$ satisfies $\omega$-NSP of order $s$.

 Since $S$ is the index set of $s$ largest absolute entries of $h\cdot\omega$,  $\sigma_{s}(x)_{\omega,1}=\|x-x_{S}\|_{\omega,1}=\|x_{S^{c}}\|_{\omega,1}$.
  By the triangle inequality of the norms,
    \begin{align*}
      \|x\|_{\omega,1}+\|(x-\hat{x})_{S^{c}}\|_{\omega,1}&\leq 2\|x_{S^{c}}\|_{\omega,1}+\|\hat{x}_{S^{c}}\|_{\omega,1}+\|x_{S}\|_{\omega,1}\\
      &\leq 2\sigma_{s}(x)_{\omega,1}+\|(x-\hat{x})_{S}\|_{\omega,1}+\|\hat{x}\|_{\omega,1}.
    \end{align*}
    Since $h=\hat{x}-x$ and $\|\hat{x}\|_{\omega,1}\leq\|x\|_{\omega,1}$, 
    $$
      \|h_{S^{c}}\|_{\omega,1}\leq 2\sigma_{s}(x)_{\omega,1}+\|\hat{x}\|_{\omega,1}-\|x\|_{\omega,1}+\|h_{S}\|_{\omega,1}
      \leq 2\sigma_{s}(x)_{\omega,1}+\|h_{S}\|_{\omega,1}.
    $$
    So the error bound in $\ell_1$ norm estimates as 
\begin{align*}
        \|h_{S}\|_{\omega,1}\leq\sqrt{s}\|h_S\|_2
        \leq\frac{\sqrt{1+\delta_{2s}}}{1-\delta_{2s}}\sqrt{s}\|Ah\|_2+\frac{\delta_{2s}}{\gamma(1-\delta_{2s})}\|h\|_{\omega,1};\\
        \label{hsw1}
        \|h_{S}\|_{\omega,1}\leq\frac{2\gamma\sqrt{1+\delta_{2s}}}{\gamma(1-\delta_{2s})-2\delta_{2s}}\sqrt{s}\rho+\frac{2\delta_{2s}}{\gamma(1-\delta_{2s})-2\delta_{2s}}\sigma_{s}(x)_{\omega,1};\\
        \|h\|_{\omega,1}\leq\frac{4\gamma\sqrt{1+\delta_{2s}}}{\gamma(1-\delta_{2s})-2\delta_{2s}}\sqrt{s}\rho+\frac{2\gamma(1-\delta_{2s})}{\gamma(1-\delta_{2s})-2\delta_{2s}}\sigma_{s}(x)_{\omega,1}.
\end{align*}
So $A_1=\frac{2\gamma(1-\delta_{2s})}{\gamma(1-\delta_{2s})-2\delta_{2s}}$ and $B_1=\frac{4\gamma\sqrt{1+\delta_{2s}}}{\gamma(1-\delta_{2s})-2\delta_{2s}}$.

Then the error bound in $\ell_2$ norm is 
\begin{align*}
    \|h\|_2&\leq\|h_S\|_2+\|h_{S^c}\|_2\\
    &\leq\frac{\sqrt{1+\delta_{2s}}}{1-\delta_{2s}}\|Ah\|_2+\frac{\delta_{2s}}{\gamma\sqrt{s}(1-\delta_{2s})}\|h\|_{\omega,1}+\frac{1}{\gamma\sqrt{s}}\|h\|_{\omega,1}\\
    &\leq\frac{2\sqrt{1+\delta_{2s}}}{1-\delta_{2s}}\rho+\frac{1}{\gamma\sqrt{s}(1-\delta_{2s})}\left[\frac{4\gamma\sqrt{1+\delta_{2s}}}{\gamma(1-\delta_{2s})-2\delta_{2s}}\sqrt{s}\rho+\frac{2\gamma(1-\delta_{2s})}{\gamma(1-\delta_{2s})-2\delta_{2s}}\sigma_{s}(x)_{\omega,1}\right]\\
    &\leq\frac{(2\sqrt{1+\delta_{2s}})(\gamma(1-\delta_{2s})-2\delta_{2s}+2)}{(1-\delta_{2s})(\gamma(1-\delta_{2s})-2\delta_{2s})}\rho+\frac{2}{\gamma(1-\delta_{2s})-2\delta_{2s}}\frac{\sigma_{s}(x)_{\omega,1}}{\sqrt{s}}.
\end{align*}
So $A_2=\frac{2}{\gamma(1-\delta_{2s})-2\delta_{2s}}$ and 
\begin{equation}
\label{b2}
  B_2=\frac{(2\sqrt{1+\delta_{2s}})(\gamma(1-\delta_{2s})-2\delta_{2s}+2)}{(1-\delta_{2s})(\gamma(1-\delta_{2s})-2\delta_{2s})}.  
\end{equation}

\end{proof}

 For the matrix $\Phi\in\mathbb{C}^{m\times{N}}$ that satisfies $\omega$-RIP-NSP with $\delta_{2s}<\frac{\gamma}{\gamma+2}$, and $x\in\mathbb{C}^{N}$. $\tilde{y}=\Phi x+\tilde{e} \in \mathbb{C}^{N}$ with $\|\tilde{e}\|_{2} \leq \epsilon$, $\omega$-RIP is \textit{stable and robust}, and the solution $\hat{x}$ of \eqref{phi1} satisfies
    \begin{align*}
     \left \| x-\hat{x}  \right \| _{\omega,1}\le C_{1}\sigma_{s}(x)_{\omega,1}+D_{1}\sqrt{s}\frac{\sqrt{1+\delta_{2s}}\sqrt{\frac{N}{s}+1}\epsilon}{\lambda(\Phi)};\\
    \left \| x-\hat{x}  \right \| _{2}\le C_{2}\frac{\sigma_{s}(x)_{\omega,1}}{\sqrt{s}}+D_{2}\frac{\sqrt{1+\delta_{2s}}\sqrt{\frac{N}{s}+1}\epsilon}{\lambda(\Phi)}.
    \end{align*}
   
    where $C_ {1},D_{1},C_{2},D_{2}$ only depends on $\delta_{2s}$ and $\gamma$.

   This result is new even in this special case.
\section{ The interrelation between properties}
\label{section4}
This section discusses the interrelation between three key concepts: $\omega$-RIP, $\omega$-RIP-NSP, and $\omega$-NSP. 
Whether a matrix satisfies the weighted properties above involves the constants of the properties. When we claim that a matrix satisfies certain weighted properties, it implies that the matrix satisfies those properties with specific constants associated with them.

It is obvious that $\omega$-RIP is strictly stronger than $\omega$-RIP-NSP by the definition of $\omega$-RIP-NSP. The set of matrices  $\{U \Phi: \text{U  is invertible}  \}$  share the same null space but can have dramatically different RIP constants. This is because $\omega$-RIP-NSP can not effectively constrain the stretching of the matrix. So there must exist matrices that share the same kernel with the matrix satisfying $\omega$-RIP with a certain $\delta$, however, the restricted isometry constants of these matrices can be much larger or smaller than $\delta$.
\begin{proposition}
\label{rrn}
    Suppose the matrix $\Phi$ satisfies $\omega$-RIP of order $s$ with $\delta$, then there exists an invertible matrix $U$ such that $U\Phi$ does not satisfy $\omega$-RIP of order $s$ with $\delta$, while $U\Phi$ satisfying $\omega$-RIP-NSP of order $s$ with $\delta$.
\end{proposition}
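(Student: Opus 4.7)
The plan is to take $U$ to be a scalar multiple of the identity, $U = cI$ with a suitably chosen constant $c>0$, so that $U$ is trivially invertible and $U\Phi = c\Phi$ shares its null space with $\Phi$. The $\omega$-RIP-NSP half of the conclusion is then immediate from Definition \ref{wripnsp}: since $\ker(c\Phi)=\ker(\Phi)$ and $\Phi$ itself satisfies $\omega$-RIP of order $s$ with constant $\delta$, the matrix $c\Phi$ inherits $\omega$-RIP-NSP of order $s$ with the same constant $\delta$ regardless of the value of $c$. This reduces the proposition to producing a single $c$ for which $c\Phi$ violates the $\omega$-RIP inequality with constant $\delta$.

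The substance of the argument is the choice of the scaling factor. I will invoke the standing assumption $s \geq 2\max_{i\in[N]}\nu_\omega(\{i\})$, which in particular gives $\nu_\omega(\{i\}) \leq s$ for every $i$, so that each standard basis vector $e_i$ is weighted $s$-sparse and available as a test vector. Applying the lower $\omega$-RIP bound of $\Phi$ to $e_i$ gives $\|\Phi e_i\|_2^2 \geq 1-\delta > 0$, hence
$$\|c\Phi e_i\|_2^2 = c^2\|\Phi e_i\|_2^2 \geq c^2(1-\delta).$$
Choosing $c > \sqrt{(1+\delta)/(1-\delta)}$ then forces $\|c\Phi e_i\|_2^2 > 1+\delta = (1+\delta)\|e_i\|_2^2$, which violates the upper $\omega$-RIP bound on the weighted $s$-sparse vector $e_i$. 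Therefore $c\Phi$ does not satisfy $\omega$-RIP of order $s$ with constant $\delta$, while by construction it does satisfy $\omega$-RIP-NSP of order $s$ with constant $\delta$.

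There is no serious obstacle here: once the null-space-preserving scaling idea is fixed, the only non-trivial verification is the existence of a concrete nonzero weighted $s$-sparse test vector, and this is supplied by the blanket assumption $s \geq 2\max_{i}\nu_\omega(\{i\})$ applied to singletons (so $\delta<1$ being implicit in the meaningfulness of $\omega$-RIP is the only regularity needed for the inequality $c > \sqrt{(1+\delta)/(1-\delta)}$ to be well-defined). The argument in fact produces a one-parameter family of counterexamples through the choice of $c$, which concretely realises the paper's heuristic that $\omega$-RIP-NSP cannot constrain stretching of the matrix.
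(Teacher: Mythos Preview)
Your proof is correct and follows essentially the same approach as the paper: both exploit that a scaling of $\Phi$ preserves the null space (hence $\omega$-RIP-NSP) while destroying the $\omega$-RIP inequality. The only cosmetic difference is the direction of scaling---you take $U=cI$ with $c>\sqrt{(1+\delta)/(1-\delta)}$ to violate the \emph{upper} RIP bound on a test vector $e_i$, whereas the paper picks $U$ with largest singular value below $(1-\delta)/(1+\delta)$ to violate the \emph{lower} bound.
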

\begin{proof}
    Assume the largest positive singular value of $U$ is $\lambda(U)<\frac{1-\delta}{1+\delta}$.
    Then for the vector $x\in\mathbb{C}^N$ with its support set $S$ satisfying $\nu_{\omega}(S)\leq s$,
    $$
        \|U\Phi x\|_2^2\leq\lambda(U)\|\Phi x\|_2^2.
    $$
    Since  the matrix $\Phi$ satisfies $\omega$-RIP of order $s$ with $\delta$, we have
    $$
    \|\Phi x\|_2^2\leq(1+\delta)\|x\|_2^2.
    $$
    Combine two inequations, we have 
    \begin{equation*}
         \|U\Phi x\|_2^2\leq\lambda(U)(1+\delta)\|x\|_2^2<(1-\delta)\|x\|_2^2,
    \end{equation*}
which violates $\omega$-RIP of order $s$ with $\delta$.
\end{proof}

It is necessary to focus on specific cases rather than studying them in a general setting, since the result that $\omega$-RIP implies $\omega$-NSP strongly depends on the \textit{sparse function} $\nu_\omega(S)$. So in this section, we must consider the special cases that $\nu_{\omega}(S)=|S|$ and $\nu_{\omega}(S)=\sum_{i\in S}\omega_i^2$. We will show that in these two special cases, $\omega$-RIP is strictly stronger than $\omega$-RIP-NSP and $\omega$-RIP-NSP is also strictly stronger than $\omega$-NSP.

 It is necessary to establish $\omega$-RIP-NSP properly including $\omega$-NSP. This is achieved by demonstrating the existence of the matrices that satisfy $\omega$-NSP but fail to satisfy $\omega$-RIP-NSP as counter-examples. This result shows that $\omega$-RIP-NSP provides a stronger guarantee for robust and stable signal recovery, as compared to $\omega$-NSP. 
To construct counter-examples, the weights described in the examples of Section \ref{section2} are utilized in the construction of the required matrices. The two types of weights often correspond to specific structured matrices commonly used in practice.
\subsection{Case 1: $\nu_{\omega}(S)=\sum_{i\in S}\omega_i^2$}
The first type of weights is motivated by application to function interpolation, and it is common to construct interpolation polynomials using discrete orthonormal systems.  By considering these systems, we can establish a strictly inclusive relationship between $\omega$-RIP-NSP and $\omega$-NSP, demonstrating the improved guarantee offered by $\omega$-RIP-NSP over $\omega$-NSP.

Finding appropriate measurement matrices for compressive sensing is a crucial task.  In many cases, completely random matrices, such as subgaussian random matrices are used\cite{rudelson2008}\cite{jac2011}\cite{dirksen2018}. Additionally, the specifically structured matrices, which  are generated by a random choice of parameters, can also provide recovery guarantees\cite{j2014A}. In this section, we will exclusively study special cases where randomly sampling functions whose expansion in a bounded orthonormal system is sparse or compressible. The weighted discrete orthonormal system is the system used in this section.

For the unitary matrix $\mathbf{U}\in\mathbb{C}^{N\times{N}}$, the normalized column vectors$\sqrt{N}\mathbf{u}_{1},\,\sqrt{N}\mathbf{u}_{2},\ldots,\,\sqrt{N}\mathbf{u}_{N}$ which are orthogonal, form an orthonormal system with respect to the discrete uniform measure on $[N]$ given by $\nu(B)=card(B)/N\;for\;B\subset[N]$:
$$
\frac{1}{N}\sum_{t=1}^N \sqrt{N}\mathbf{u}_k(t) \sqrt{N}\,\overline{ \mathbf{u}_{\ell}(t)}=\left\langle\mathbf{u}_k, \mathbf{u}_{\ell}\right\rangle=\delta_{k, \ell}=\{\begin{matrix} 
 1,\;k=\ell  \\  
  0,\;k\ne\ell 
\end{matrix} \quad k,\;\ell \in[N].
$$

Choosing the points $t_{1},\ldots,t_{m}$ independently and uniformly at random from $[N]$ generates the random matrix $\mathbf{A}$, i.e. selecting m rows independently and uniformly at random from the original $\mathbf{U}$ to form $\mathbf{A}$, which can be described as:
$$
\mathbf{A}=\mathbf{R}_{T}\mathbf{U}
$$
where $T=\{t_{1},\ldots,t_{m}\}\subset [N]$, and $\mathbf{R}_{T}:\mathbb{C}^{N}\longrightarrow\mathbb{C}^{m}$ denote the random subsampling operator:
$$
\left(\mathbf{R}_T \mathbf{z}\right)_{\ell}=z_{t_{\ell}}, \quad \ell \in[m].
$$
where $z\in\mathbb{C}^{N}=[z_{1},z_{2},\ldots,\,z_{N}]^{T}$.\\

For any given weight $\omega\geq 1$, there exists a unitary matrix satisfying $\omega_{j}\geq\sqrt{N}\|\mathbf{u}_{j}\|_{\infty}$. A typical example of such a matrix is the discrete Fourier transform(DFT) with N dimensions, the infinity norm of any column vectors is $\frac{1}{\sqrt{N}}$, allowing the use of any weights.

To construct matrices satisfying $\omega$-NSP but not $\omega$-RIP-NSP, the initial step is to find the matrices satisfying $\omega$-NSP with this type of weights. According to the previous literature, certain structured matrices satisfy $\omega$-RIP under specific conditions, and since $\omega$-RIP implies $\omega$-NSP, these structured matrices can be used as a starting point.

Verifying whether matrices satisfy the $\omega$-RIP property can be challenging in practice. However, there are some structured matrices, such as Gaussian matrices, subgaussian matrices, and matrices from orthogonal systems, that have been shown to satisfy the $\omega$-RIP under certain conditions\cite{rauhut2016}\cite{j2014A}. 
Here is the result for the orthogonal system matrices that it will satisfy $\omega$-RIP with some probability if the number of measurements is large enough.
\begin{theorem}{see in \cite[Theorem 1.1]{rauhut2016}}
    \label{thmrip}
Fix parameters $\delta\in(0,1)$, let $\left(\psi_j\right)_{j \in \Lambda}$ be an orthogonal system of finite size $N=|\Lambda|$. Consider weights satisfying $\omega_j \geq\left\|\psi_j\right\|_{\infty}$. Fix
$$
m \geq C \delta^{-2} s \log ^3(s) \log (N)
$$
and suppose that $t_1, t_2, \ldots, t_m$ are drawn independently from the orthogonalization measure associated with the $\left(\psi_j\right)_{j\in\Lambda}$. Then the normalized sampling matrix $A\in \mathbb{C}^{m \times N}$ with entries $\boldsymbol{A}_{\ell, k}=\frac{1}{\sqrt{m}}\psi_k\left(t_{\ell}\right)$ satisfies the weighted restricted isometry property of order $s$ with the probability exceeding  $1-N^{-\log^3(s)}$, i.e. $\delta_{\omega, s} \leq \delta$.
\end{theorem}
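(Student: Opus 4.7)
The plan is to recast the weighted restricted isometry constant as the supremum of a centered empirical process and then deploy the standard toolkit from high-dimensional probability: symmetrization, Dudley's chaining inequality, Maurey's empirical method for covering numbers, and a Talagrand-type concentration inequality for suprema of bounded empirical processes. Concretely, let $D_{\omega,s} = \{ x \in \mathbb{C}^N : \|x\|_2 = 1,\ \omega(\mathrm{supp}(x)) \leq s\}$ be the set of weighted $s$-sparse unit vectors. Using orthonormality of $(\psi_j)_{j\in\Lambda}$ with respect to the orthogonalization measure, one checks that $\mathbb{E}|\langle \psi(t_1), x\rangle|^2 = \|x\|_2^2$, so that
\begin{equation*}
\delta_{\omega, s} = \sup_{x \in D_{\omega,s}} \left| \frac{1}{m}\sum_{\ell=1}^m |\langle \psi(t_\ell), x\rangle|^2 - \mathbb{E}|\langle \psi(t_1), x\rangle|^2 \right|.
\end{equation*}

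First I would apply the standard symmetrization lemma to replace the centered process by a Rademacher chaos indexed by $D_{\omega,s}$, which reduces the problem to estimating $\mathbb{E}\sup_{x\in D_{\omega,s}} |\frac{1}{m}\sum_\ell \varepsilon_\ell |\langle \psi(t_\ell), x\rangle|^2|$ for independent Rademacher signs. Next I would apply Dudley's entropy integral bound (or, for sharper constants, generic chaining) with respect to the seminorm $\|x\|_\psi = \max_{\ell} |\langle \psi(t_\ell), x\rangle|$. The entire argument hinges on controlling the covering numbers $\mathcal{N}(D_{\omega,s}, \|\cdot\|_\psi, u)$ and $\mathcal{N}(D_{\omega,s}, \|\cdot\|_2, u)$. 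The key inclusion is that the weighted sparsity constraint $\omega(\mathrm{supp}(x)) \leq s$ together with $\|x\|_2 \leq 1$ implies $\|x\|_{\omega,1} \leq \sqrt{s}$ by Cauchy--Schwarz, so $D_{\omega,s}$ sits inside a scaled weighted $\ell_1$ ball intersected with the $\ell_2$ ball.

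The main obstacle, and the technical heart of the argument, will be bounding the covering numbers of this weighted $\ell_1$ ball by Maurey's empirical method: one approximates a point of the ball by an average of a short sequence of atoms $\omega_j^{-1} e_j$, where the probability of selecting each atom is proportional to $\omega_j |x_j|$. The hypothesis $\omega_j \geq \|\psi_j\|_\infty$ is exactly what controls the norm $\|\omega_j^{-1} \psi_j\|_\infty \leq 1$ of the atoms in the $\|\cdot\|_\psi$ metric, and this is the place where the weight assumption enters critically. The output is a covering bound of the form $\log \mathcal{N}(D_{\omega,s}, \|\cdot\|_\psi, u) \lesssim s u^{-2} \log(N)$ for small $u$, with a complementary volumetric bound for large $u$.

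Finally I would integrate the Dudley bound to obtain $\mathbb{E}\, \delta_{\omega, s} \lesssim \sqrt{s\log^3(s)\log(N)/m} + s\log^3(s)\log(N)/m$, so that the choice $m \geq C\delta^{-2}s\log^3(s)\log(N)$ yields $\mathbb{E}\,\delta_{\omega,s} \leq \delta/2$. A Talagrand/Bernstein-type concentration inequality for the supremum of a bounded empirical process, applied with variance and sup-norm parameters of order $1/m$, then promotes this expectation bound to the tail estimate $\Pr(\delta_{\omega,s} > \delta) \leq N^{-\log^3(s)}$. The two delicate pieces to get right will be (i) the Maurey covering-number estimate with the correct $\log^3(s)$ factor (which typically requires an iterated use of the empirical method together with dual Sudakov), and (ii) matching the variance and sup-norm parameters in the concentration step so that the probability exponent comes out as $\log^3(s)\log(N)$ rather than merely $\log(N)$.
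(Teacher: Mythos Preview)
The paper does not actually prove this statement; it merely quotes it from \cite{rauhut2016} (the theorem is labelled ``see in \cite[Theorem 1.1]{rauhut2016}'') and then uses it as a black box in the proof of Corollary~\ref{c1}. So there is no in-paper proof to compare against.

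That said, your outline is precisely the standard route used in the cited source and in the broader literature (e.g.\ Chapter~12 of \cite{j2014A}): rewrite $\delta_{\omega,s}$ as the supremum of a centered empirical process over the weighted-sparse unit sphere, symmetrize, bound the Rademacher chaos via Dudley's entropy integral, estimate covering numbers of the weighted $\ell_1$-ball by Maurey's empirical method (where the hypothesis $\omega_j\ge\|\psi_j\|_\infty$ is exactly what caps the atom norms), and finish with a bounded-differences/Talagrand concentration step. Your identification of the two delicate points is accurate: the extra $\log^3(s)$ factor arises from the interplay between the Dudley integral and a dual-Sudakov/iterated-Maurey argument, and the probability exponent $N^{-\log^3(s)}$ requires matching the variance proxy in the concentration inequality to $m\gtrsim\delta^{-2}s\log^3(s)\log(N)$. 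If you carry these through carefully you recover Rauhut's result; nothing in your plan is wrong, and nothing essential is missing.
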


There exists a connection between $\omega$-RIP and $\omega$-NSP. In the unweighted case, many studies have demonstrated that matrices satisfying the restricted isometry property also satisfy the null space property (e.g., Theorem 6.13 in \cite{j2014A}). Similar conclusions can be established in the weighted case, albeit with some differences in the values of the restricted isometry constant.
And Rauhut\cite{rauhut2016} has further established that under certain restrict isometry constants and order of $\omega$-RIP, $\omega$-RIP implies $\omega$-robust-NSP, which subsequently implies $\omega$-NSP. 
\begin{theorem}{see in \cite[Theorem 4.5]{rauhut2016}}
\label{1/3nsp}
Let matrix $A\in\mathbb{C}^{m\times{N}}$ with $\omega$-RIP constant
$$\delta_{\omega,3s}<1/3$$
for $s\geq2\|\omega\|_{\infty}^2$. Then $A$ satisfies $\omega$-NSP of order $s$ with constants $\gamma=2\delta_{\omega,3s}/(1-\delta_{\omega,3s})<1$.
\end{theorem}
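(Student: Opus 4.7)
The plan is to show that the $\omega$-RIP hypothesis actually yields a stronger \emph{weighted robust null space property}, from which the $\omega$-NSP with constant $\gamma=2\delta_{\omega,3s}/(1-\delta_{\omega,3s})$ follows immediately by restricting to $v\in\ker(A)$. Concretely, I aim to establish
\begin{equation*}
  \|v_S\|_{\omega,1}\leq\rho\|v_{S^c}\|_{\omega,1}+\tau\sqrt{s}\|Av\|_2
  \quad\text{for all }v\in\mathbb{C}^N\text{ and }S\subset[N]\text{ with }\omega(S)\leq s,
\end{equation*}
with $\rho=2\delta_{\omega,3s}/(1-\delta_{\omega,3s})$; setting $Av=0$ then returns $\omega$-NSP with $\gamma=\rho$.

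To prove this inequality, fix $v$ and $S$, and partition $S^c=\bigcup_{l\geq1}S_l$ greedily by decreasing order of $|v_j|$, accumulating indices until the weighted cardinality of the current block would exceed $2s$. The hypothesis $s\geq 2\|\omega\|_\infty^2$ ensures no single index contributes more than $s/2$, so each block can be chosen with $\omega(S_l)\leq 2s$, which in turn gives $\omega(S\cup S_l)\leq 3s$, matching the available RIP order. From $Av_S=Av-\sum_{l\geq 1}Av_{S_l}$, together with the lower $\omega$-RIP bound and the polarization estimate $|\langle Au,Aw\rangle|\leq\delta_{\omega,3s}\|u\|_2\|w\|_2$ applied to $u=v_S$, $w=v_{S_l}$ (valid because the supports are disjoint and $\omega(S\cup S_l)\leq 3s$), a standard Cauchy--Schwarz manipulation yields
\begin{equation*}
  \|v_S\|_2 \leq \tfrac{\sqrt{1+\delta_{\omega,3s}}}{1-\delta_{\omega,3s}}\|Av\|_2 + \tfrac{\delta_{\omega,3s}}{1-\delta_{\omega,3s}}\sum_{l\geq 1}\|v_{S_l}\|_2.
\end{equation*}
The monotonicity of the sorting delivers $\|v_{S_l}\|_2\leq\|v_{S_{l-1}}\|_{\omega,1}/\sqrt{s}$ for $l\geq 2$, and a separate estimate on the first block $S_1$ (whose entries are not a priori dominated by those of $S$) allows me to telescope to $\sum_{l\geq 1}\|v_{S_l}\|_2\leq 2\|v_{S^c}\|_{\omega,1}/\sqrt{s}$, where the factor $2$ absorbs the extra contribution from $S_1$. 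Finally, the Cauchy--Schwarz inequality $\|v_S\|_{\omega,1}\leq\sqrt{\omega(S)}\|v_S\|_2\leq\sqrt{s}\|v_S\|_2$ converts the $\ell_2$ bound into the desired weighted $\ell_1$ bound with $\rho=2\delta_{\omega,3s}/(1-\delta_{\omega,3s})$, and the condition $\delta_{\omega,3s}<1/3$ is exactly what guarantees $\rho<1$.

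The main obstacle will be the weighted telescoping step: in contrast to the unweighted proof where blocks of size exactly $s$ make the arithmetic transparent, here the weights vary across indices and block sizes are controlled only through weighted cardinality. The two structural facts making the argument go through are $\omega_j\geq 1$ (so that $\|v_{S_l}\|_2$ can be compared to $\|v_{S_l}\|_{\omega,1}$) and $s\geq 2\|\omega\|_\infty^2$ (so each greedy block carries enough weighted mass for the $1/\sqrt{s}$ factor to arise cleanly). Balancing these carefully, and in particular handling the boundary block $S_1$ without enlarging the constant beyond $2$, is the delicate piece of bookkeeping that determines whether the sharp constant $2\delta_{\omega,3s}/(1-\delta_{\omega,3s})$ is actually achieved.
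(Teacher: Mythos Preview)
The paper does not supply its own proof of this theorem; it is imported verbatim from \cite{rauhut2016} (Theorem~4.5 there) and used as a black box in Corollary~\ref{c1} and Section~\ref{ex1}. Your proposal reconstructs precisely the argument of that reference: pass through the weighted robust null space property (Definition~\ref{robustnsp} in the present paper), obtained by greedy weighted-cardinality partitioning of $S^c$ into blocks of weight at most $2s$, then specialize to $v\in\ker(A)$. So your approach and the one the paper relies on are the same.

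One small remark on the sketch: the sorting in Rauhut's argument is by the \emph{weighted} magnitudes $\omega_j|v_j|$, not by $|v_j|$ alone, and the blocks are built so that $s\leq\omega(S_l)\leq 2s$ (the lower bound on weighted mass is what drives the $1/\sqrt{s}$ factor in $\|v_{S_l}\|_2\leq\|v_{S_{l-1}}\|_{\omega,1}/\sqrt{s}$). With that adjustment the telescoping goes through without a separate case for $S_1$, and the constant $2$ in $\rho=2\delta_{\omega,3s}/(1-\delta_{\omega,3s})$ arises from the ratio $2s/s$ between the block weight cap and the target sparsity, rather than from a boundary correction. This is exactly the ``delicate bookkeeping'' you flagged, and it is resolved cleanly once the ordering is taken in the weighted sense.
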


Next, with the help of the weighted discrete orthonormal system, we will demonstrate that $\omega$-RIP-NSP is strictly stronger than $\omega$-NSP, by constructing a matrix that satisfies $\omega$-NSP but not $\omega-$RIP-NSP. Before proving the main conclusion, we first obtain a corollary for the discrete orthogonal system using the theorems mentioned above.
By substituting $s$ with $3s$ in Theorem \ref{thmrip}, and utilizing Theorem \ref{1/3nsp}, we can derive the following corollary.

\begin{corollary}
\label{c1}
    For any given positive constant $\gamma<1$, if constants $C$, $m,\,s,\,N$ satisfy $N/2> m\geq 3C(\frac{2+\gamma}{\gamma})^{2}s\log^{3}(3s)\log(N)$, there exists $R\in \mathbb{C}^{m\times{N}}$ with $\omega$-NSP of order $s$ with $\gamma$ and $(1,1,\ldots,1)^{T}\in ker(R)$.
\end{corollary}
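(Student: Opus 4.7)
The plan is to take $R$ to be a random partial DFT matrix, chosen so that the all-ones vector lies in its kernel automatically, and then invoke Theorem \ref{thmrip} together with Theorem \ref{1/3nsp} to obtain $\omega$-NSP of order $s$ with the prescribed constant $\gamma$. Setting $\delta := \gamma/(2+\gamma)$, the assumption $\gamma<1$ gives $\delta<1/3$, and the conversion formula in Theorem \ref{1/3nsp} becomes $2\delta/(1-\delta)=\gamma$. Hence it suffices to build an $m\times N$ matrix with $\omega$-RIP of order $3s$ and constant at most $\delta$, and which annihilates $(1,1,\ldots,1)^T$. The hypothesis on $m$ can be rewritten as $m\geq C\delta^{-2}(3s)\log^3(3s)\log(N)$, matching exactly the sample complexity required by Theorem \ref{thmrip} at order $3s$.

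For the construction I would work with the discrete Fourier orthonormal system $\psi_k(t)=e^{2\pi i k t/N}$ for $k,t\in\{0,1,\ldots,N-1\}$, whose supremum norm is $1$ so that the weight condition $\omega_k\geq \|\psi_k\|_\infty$ in Theorem \ref{thmrip} is satisfied for any admissible $\omega\geq 1$. Drawing $t_1,\ldots,t_m$ i.i.d.\ uniformly from $\{0,\ldots,N-1\}$ and defining $A_{\ell,k}=m^{-1/2}\psi_k(t_\ell)$, Theorem \ref{thmrip} guarantees $\delta_{\omega,3s}(A)\leq\delta$ with probability at least $1-N^{-\log^3(3s)}$. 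The algebraic observation that handles the kernel requirement is that $\sum_{k=0}^{N-1}e^{2\pi i k t/N}$ vanishes for every $t\not\equiv 0\pmod N$; so on the event $E$ that none of the sampled indices $t_\ell$ equals $0$, every row of $A$ sums to $0$ and $A(1,1,\ldots,1)^T=0$. The probability of $E$ is $(1-1/N)^m\geq 1-m/N>1/2$ by the hypothesis $m<N/2$.

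A union bound then gives
\begin{equation*}
\mathbb{P}\bigl(\delta_{\omega,3s}(A)\leq\delta\ \text{and}\ A(1,1,\ldots,1)^T=0\bigr)\geq \tfrac{1}{2}-N^{-\log^3(3s)},
\end{equation*}
which is strictly positive as soon as $N\geq 3$, since $\log^3(3s)\geq \log^3 3>1$ forces $N^{-\log^3(3s)}<1/2$. Selecting any realization in this joint event and setting $R:=A$, Theorem \ref{1/3nsp} upgrades the $\omega$-RIP bound at order $3s$ to $\omega$-NSP at order $s$ with constant exactly $\gamma$, completing the construction. The only delicate point is coordinating the two probabilistic events, which is precisely what the two-sided condition on $m$ in the corollary is designed for: the lower bound on $m$ secures the RIP event, and the upper bound $m<N/2$ keeps the "zero index never sampled" event above probability $\tfrac{1}{2}$; the remainder of the argument is a direct packaging of the quoted theorems.
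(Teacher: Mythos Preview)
Your proof is correct and follows essentially the same route as the paper: set $\delta=\gamma/(2+\gamma)$, sample a random partial Fourier matrix, invoke Theorem~\ref{thmrip} at order $3s$ to obtain $\omega$-RIP with probability at least $1-N^{-\log^3(3s)}$, avoid the index corresponding to the all-ones row so that $(1,\ldots,1)^T$ lies in the kernel, and then combine the two events via a union bound before applying Theorem~\ref{1/3nsp}. The only cosmetic differences are that the paper phrases the construction in terms of a generic unitary matrix whose first row is constant (of which the DFT is the canonical example) and writes the ``no bad index'' probability directly as $1-m/N$ rather than as $(1-1/N)^m\geq 1-m/N$; your explicit use of the geometric-series identity $\sum_k e^{2\pi i kt/N}=0$ for $t\not\equiv 0$ is a nice concrete touch, but the underlying argument is identical.
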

    
\begin{proof}
      There exists a unitary matrix $\mathbf{U}\in\mathbb{C}^{N\times{N}}$ such that the first row vector is $(1,1,\ldots,1)^{T}$.

      For the given $\gamma$, let $\delta=\frac{\gamma}{2+\gamma}$, $R$ be the matrix sampled independently and uniformly at random from $U$. If the sample size satisfies $m \geq C \delta^{-2} 3s \log ^3(3s) \log (N)$ as in \ref{thmrip}, then $R$ satisfies $\omega$-RIP of order $3s$ with probability $p_{1}=1-N^{-\log^3(3s)}$, i.e. $\delta_{\omega, 3s} \leq\delta<1/3$.
      Then from Theorem \ref{1/3nsp}, $R$ satisfies $\omega$-NSP of order $s$ with $\tilde{\gamma}=\frac{2\delta_{\omega, 3s}}{1-\delta_{\omega, 3s}}<\gamma$ and probability $p_1$, which implies that $R$ satisfies $\omega$-NSP of order $s$ with $\gamma$ and probability $p_1$.

      Since the row vectors of unitary matrices are orthogonal, $R$ can not select the first row to allow $e=(1,1,\ldots,1)^{T}\in \ker(R)$. The probability of that is $p_{2}=1-\frac{m}{N}$.
      To guarantee there exists a matrix $R$ satisfies $\omega$-NSP of order $s$ and $e=(1,1,\ldots,1)^{T}\in \ker(R)$, the matrices satisfying $\omega$-NSP with $p_1$ can not all involve the first row, which means $p_1> 1-p_2$.
      Since $m\geq 3C(\frac{2+\gamma}{\gamma})^{2}s\log^{3}(3s)\log(N)$, we have $p_1\geq\frac{1}{2}$. And if $m<N/2$, $p_2>\frac{1}{2}$. So $p_1+p_2>1$ holds.
\end{proof}

Fix that the weighted restricted isometry constant of order $3s$ is $\delta_{\omega,3s}$, then $\omega$-NSP of order $s$ holds for that matrix. And the matrices that share the same null space with the $\omega$-RIP matrix satisfy $\omega$-RIP-NSP with $\delta_{\omega,3s}$. Since $\omega$-NSP only depends on the null space, those matrices also satisfy $\omega$-NSP. In conclusion, $\omega$-RIP-NSP implies $\omega$-NSP. If there exists a matrix that shares the same null space with the $\omega$-RIP matrix, but with the specific constant $\delta_{\omega,3s}$, $\omega$-RIP-NSP does not hold, the result can be shown.

Based on Corollary \ref{c1}, the conditions for the parameters are determined. As a result, we can establish the following conclusion.
\begin{theorem}
      \label{1.3}
      For the given weight $\omega$, if $m,\;s,\;N\in\mathbb{Z}$ satisfy that $N/2\geq m\geq 81Cs\log^{3}(3s)\log(N)$, $N\geq24\|\omega\|_\infty^2s$ and $s>23040\|\omega\|_\infty^{6}$, then there exists a sensing matrix $\Phi\in\mathbb{C}^{m\times{N}}$ that satisfies $\omega$-NSP of order $s$, but does not satisfy $\omega$-RIP-NSP of order $3s$ with $\delta_{\omega,3s}<1/3$. 
\end{theorem}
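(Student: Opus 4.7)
The plan is to invoke Corollary \ref{c1} to obtain a matrix $R\in\mathbb{C}^{m\times N}$ that satisfies $\omega$-NSP of order $s$ with some fixed constant $\gamma_{0}<1$ and with $e=(1,1,\ldots,1)^{T}\in\ker(R)$, and then to enlarge $\ker(R)$ by one carefully chosen weighted $3s$-sparse direction that prevents any matrix with the resulting null space from satisfying $\omega$-RIP of order $3s$, while still preserving $\omega$-NSP of order $s$. The threshold $m\geq 81Cs\log^{3}(3s)\log(N)$ in the hypothesis is precisely the requirement of Corollary \ref{c1} for the particular $\gamma_{0}$ determined by $3((2+\gamma_{0})/\gamma_{0})^{2}=81$, while the additional hypotheses $N\geq 24\|\omega\|_{\infty}^{2}s$ and $s>23040\|\omega\|_{\infty}^{6}$ will supply the quantitative slack needed to verify that $\omega$-NSP survives the enlargement.

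For the construction, I would select a subset $T\subset[N]$ with $\nu_{\omega}(T)=\sum_{j\in T}\omega_{j}^{2}\leq 3s$ and with $|T|$ as large as this bound allows (of order $3s/\|\omega\|_{\infty}^{2}$, which fits comfortably inside $[N]$ by $N\geq 24\|\omega\|_{\infty}^{2}s$), and take $v=e_{T}$, the indicator vector of $T$. Because $e\in\ker(R)$, we have $e_{T}=e-e_{T^{c}}$, so augmenting $\ker(R)$ by $v$ is equivalent to augmenting it by $-e_{T^{c}}$, an observation that is useful when $|T^{c}|$ is smaller. Assuming $Rv\neq 0$ (otherwise one perturbs $T$ by a single index), I define $\Phi:=PR$, where $P$ is the orthogonal projection in $\mathbb{C}^{m}$ onto the orthogonal complement of $Rv$; then $\Phi v=0$ and $\ker(\Phi)=\ker(R)\oplus\mathrm{span}(v)$. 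The $\omega$-RIP-NSP failure of $\Phi$ with $\delta_{\omega,3s}<1/3$ is then immediate: any matrix $A$ with $\ker(A)=\ker(\Phi)$ satisfies $Av=0$ while $\nu_{\omega}(\mathrm{supp}(v))\leq 3s$ and $v\neq 0$, so the lower $\omega$-RIP inequality $(1-\delta_{\omega,3s})\|v\|_{2}^{2}\leq\|Av\|_{2}^{2}=0$ forces $\delta_{\omega,3s}\geq 1$.

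The main obstacle will be showing that $\Phi$ still satisfies $\omega$-NSP of order $s$. An arbitrary nonzero $u\in\ker(\Phi)$ decomposes as $u=u'+\lambda v$ with $u'\in\ker(R)$ and $\lambda\in\mathbb{C}$, and the task is to establish $\|u_{S}\|_{\omega,1}<\|u_{S^{c}}\|_{\omega,1}$ for every $S\subset[N]$ with $\nu_{\omega}(S)\leq s$. I plan to split the analysis into two regimes according to the size of $|\lambda|\,\|v\|_{\omega,1}$ relative to $\|u'\|_{\omega,1}$. In the small-$\lambda$ regime, the NSP of $R$ supplies a gap $\|u'_{S^{c}}\|_{\omega,1}-\|u'_{S}\|_{\omega,1}\geq(1-\gamma_{0})\|u'_{S^{c}}\|_{\omega,1}$ that is wide enough to absorb the perturbation $\lambda v$, once the Cauchy--Schwarz bound $\|v_{S}\|_{\omega,1}\leq\sqrt{|T\cap S|\cdot\nu_{\omega}(S)}$ is combined with $|T\cap S|\leq\nu_{\omega}(S)/\omega_{\min}(T)^{2}$. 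In the large-$\lambda$ regime, $\lambda v$ dominates and the inequality reduces to $\|v_{S}\|_{\omega,1}<\|v_{S^{c}}\|_{\omega,1}$, which holds with a robust margin because $|T|$ is of order $3s/\|\omega\|_{\infty}^{2}$ while $|T\cap S|$ is at most $s/\omega_{\min}(T)^{2}$, leaving the bulk of $T$ outside $S$. The constants $23040$ and $24$ in the hypotheses are tuned so that both margins remain strictly positive uniformly over $u$ and $S$, closing the $\omega$-NSP verification.
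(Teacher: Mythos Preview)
Your approach is quite different from the paper's, and it has a genuine gap in the large-$\lambda$ regime of the $\omega$-NSP verification.

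The paper does \emph{not} place a weighted-sparse vector in the kernel. It works instead with a smaller matrix $A\in\mathbb{C}^{(m-k)\times(N-k)}$ obtained from Corollary~\ref{c1} (with $k$ chosen so that $\sum_{i\le k}\omega_i^{2}\le s<\sum_{i\le k+1}\omega_i^{2}$), embeds $\ker(A)\cap e^{\perp}$ into the last $N-k$ coordinates, and adjoins the \emph{dense} vector
\[
d=\Bigl(\tfrac{N-4k}{4\omega_{1}},\tfrac{N-4k}{8\omega_{2}},\ldots,\tfrac{N-4k}{2^{k+1}\omega_{k}},-1,\ldots,-1\Bigr).
\]
The failure of $\omega$-RIP-NSP is then obtained by contradicting the quantitative error bound of Proposition~\ref{probd}: one exhibits explicit $x_{0},\hat{x}$ with $\|\hat{x}\|_{\omega,1}\le\|x_{0}\|_{\omega,1}$ and $\|\Phi(\hat{x}-x_{0})\|_{2}=\rho$ small, while $\|\hat{x}-x_{0}\|_{2}$ is of order $N$. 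The geometric decay in the first $k$ entries of $d$ is exactly what makes the $\omega$-NSP verification go through while simultaneously forcing this large $\ell_{2}$ discrepancy.

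Your idea of placing a weighted $3s$-sparse $v=e_{T}$ in the kernel so that $\omega$-RIP of order $3s$ fails trivially is more direct, but the $\omega$-NSP check for $u=v$ itself (the $\lambda\to\infty$ limit of your dichotomy) breaks for general weights. You need $\sum_{i\in S\cap T}\omega_{i}<\sum_{i\in T\setminus S}\omega_{i}$ for every $S$ with $\nu_{\omega}(S)\le s$. Your bounds give $\|v_{S}\|_{\omega,1}\le s/\omega_{\min}(T)$ via Cauchy--Schwarz, while $\|v\|_{\omega,1}=\sum_{i\in T}\omega_{i}\ge \nu_{\omega}(T)/\omega_{\max}(T)\approx 3s/\omega_{\max}(T)$; the desired strict inequality then follows only when $\omega_{\max}(T)<\tfrac{3}{2}\,\omega_{\min}(T)$, i.e.\ when the weights on $T$ are nearly uniform. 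Nothing in the hypotheses guarantees such a $T$ with $\nu_{\omega}(T)$ close to $3s$: if, say, exactly one index carries weight $\|\omega\|_{\infty}$ and all others carry weight $1$, then any $T$ with $\omega_{\min}(T)>\tfrac{2}{3}\|\omega\|_{\infty}$ is a singleton, and $S=T$ (which has $\nu_{\omega}(S)=\|\omega\|_{\infty}^{2}\le s$) kills the inequality outright. Your phrasing ``$|T|$ of order $3s/\|\omega\|_{\infty}^{2}$'' together with ``$|T\cap S|\le s/\omega_{\min}(T)^{2}$'' mixes a lower bound on $|T|$ with an upper bound on $|S\cap T|$ involving \emph{different} weight parameters, and these are not comparable unless the weights on $T$ lie in a narrow band. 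A dyadic pigeonhole on weight values might locate such a band, but this is absent from the proposal and it is not clear the specific constants $24$ and $23040$ in the hypotheses would survive.
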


 \begin{proof}
$\omega$-NSP and $\omega$-RIP-NSP only depend on the null space of matrices. In the proof, firstly we get a matrix satisfying $\omega$-NSP from Corollary \ref{c1}. And we construct a matrix depending on the null space of the former matrix. The second step of the proof is to show that the matrix we construct satisfies $\omega$-NSP. The third step is to prove that the matrix violates robustness and stability so it does not satisfy $\omega$-RIP-NSP.

      (1) construct the matrix

      For the given weight $\omega=[\omega_1,\ldots,\omega_N]^T$, $\omega_j\geq1$, $j=1,\ldots,N$, $k$ is a positive integer smaller than $s$ such that $\sum_{i=1}^k\omega_i^2\leq s$ and $\sum_{i=1}^{k+1}\omega_i^2>s$.
      By the assumption that $m\geq 81Cs\log(N)\log^{3}(3s)$ and $k\leq s$, it is obvious that $k\leq27Cs\log^3(3s)[3\log(N)-\log(N-s)]$, so we have $(m-k)\geq 27Cs\log(N-k)\log^{3}(3s)$.
      By Corollary \ref{c1}, for weight $\tilde{\omega}=[\omega_{k+1},\ldots,\omega_N]^T$, there exists a matrix $A\in \mathbb{R}^{(m-k)\times (N-k)}$ satisfies $\omega$-NSP of order $s$ with $\gamma=1/3$ and $e=(1,1,\ldots,1)^{T}\in \ker(A)$.

      Define $$\mathcal{N}_{e}=\{x\in \ker (A): x\bot e\}.$$
      Since $A$ is sampled by a unitary matrix, $rank(A)=m-k$, so dimension of $\ker(A)$ is $N-m$, $\dim(\mathcal{N}_{e})=N-m-1.$ 
      Then let
      $$\mathcal{N}_{e}^{\prime}=\left\{(\underbrace{0, \ldots, 0}_{k}, x)\in\mathbb{C}^N: x \in \mathcal{N}_{e}\right\},$$
      $$\mathcal{N} = \mathcal{N}_{e}^{\prime} \oplus \operatorname{span}(d),$$
which will be the null space of constructed matrix,    
      where$$d = (\frac{N-4 k}{2^{2}\omega_1}, \frac{N-4 k}{2^{3}\omega_2}, \ldots, \frac{N-4 k}{2^{k+1}\omega_k},  \underbrace{-1, \ldots,-1}_{N-k})\in\mathbb{C}^N.$$
      It is obvious that $\dim(\mathcal{N})=N-m$, then $\dim(\mathcal{N}^{\perp})=m$.

      Assume that
      $$\varphi_{1} = \frac{1}{\rho}(\underbrace{\alpha, \ldots, \alpha}_{k}, 1, \ldots, 1)\in\mathbb{C}^N,$$
      where $\alpha$ and $\rho$ are undetermined coefficients.

      Normalize with $\rho$ so that $\left\|\varphi_{1}\right\|_{2} = 1$, which means $\rho=\sqrt{N+(\alpha^2-1)k}$.
      Let $\varphi_{1} \perp d$, then $\alpha = \frac{(N-k)}{(N-4 k)\sum_{i=1}^k(\frac{1}{2^{i+1}\omega_i})}$, so
      \begin{equation}
      \label{alpha}
          \frac{2(N-k)}{(N-4k)(1-2^{-k})}\leq\alpha\leq\frac{2(N-k)\|\omega\|_\infty}{(N-4k)(1-2^{-k})}.
      \end{equation}
      Since $\varphi_{1} \perp d$ and $x\perp (1, \ldots, 1)$, then $\varphi_{1} \perp \mathcal{N}$. 
      So there exists an orthonormal basis of $\mathcal{N}^{\perp}$ with $\varphi_{1}$ being one of the basis vectors. Let $\Phi\in\mathbb{C}^{m\times N}$ be the matrix whose rows are this orthonormal basis, where $\varphi_1$ is the first row.

      Thus the null space of $\Phi$ is $\mathcal{N}$, and since the rows of $\Phi$ are orthonormal, $\Phi \Phi^{*} = I$ and $\lambda(\Phi) = 1$.
        
      (2) prove the matrix satisfies $\omega$-NSP

      Since $\mathcal{N}=\mathcal{N}_{e}^{'}\bigoplus d$, to prove $\Phi$ satisfies $\omega$-NSP, we need to prove that for any subset $T\subset[N]$ with $\sum_{i\in T}\omega_i^2\leq s$ and vector $h\in \mathcal{N}_{e}^{'}$, $b=h+d$ meets $\left\|b_{T}\right\|_{\omega,1}<\left\|b_{T^c}\right\|_{\omega,1}$.
      Let set $I=\{1,\ldots,k\}$.
      $\|d_{I}\|_{\omega,1}=\frac{N-4k}{2}(1-\frac{1}{2^{k}})<\frac{(N-k)}{2}
      =\frac{1}{2}\sum_{i\in I^c}(-d_i)=\frac{1}{2}\sum_{i\in I^c}(-d_i-h_i)$.
    The last equality is because $h\perp e$, and $h_i=0$, $i=1,\ldots,k$, then $\sum_{i\in I^c}h_i=0$.

     This together with the observation $\omega_{i}\geq 1$, further implies
      \begin{equation}
      \label{1}
           \|b_{I}\|_{\omega,1}=\|d_{I}\|_{\omega,1}\leq \frac{1}{2}\sum_{i=k+1}^{N}|\omega_{i}||d_{i}+h_{i}|
      =\frac{1}{2}\|d_{I^{c}}+h_{I^{c}}\|_{\omega,1}=\|b_{I^c}\|_{\omega,1}. 
      \end{equation}

      For any subset $S\subset I^{c}$ with $\sum_{i\in S}\omega_i^2\leq s$, since $b_{I^{c}}=d_{I^{c}}+h_{I^{c}}\in\ker(A)$ and $A$ satisfies $\omega$-NSP with $\gamma=1/3$, we have
      \begin{equation}
      \label{2}
 \|b_{S}\|_{\omega,1}=\|(b_{I^{c}})_{S}\|_{\omega,1}
      \le\frac{1}{3}\|(b_{I^{c}})_{S^{c}}\|_{\omega,1}
      =\frac{1}{3}\|b_{I^{c}\cap S^{c}}\|_{\omega,1}.          
      \end{equation}

      For any subset $T\subset[N]$ with $\sum_{i\in T}\omega_i^2\leq s$, by \eqref{1} and \eqref{2}, we conclude
      \begin{align*}
        \left\|b_{T}\right\|_{\omega,1} &=\left\|b_{T \cap I}\right\|_{\omega,1}+\left\|b_{T \cap I^{c}}\right\|_{\omega,1}\\
        &\leq\left\|b_{I}\right\|_{\omega,1}+\left\|b_{T \cap I^{c}}\right\|_{\omega,1} \\
        &<\frac{1}{2}\left\|b_{I^{c}}\right\|_{\omega,1}+\left\|b_{T \cap I^{c}}\right\|_{\omega,1} \\
        &=\frac{1}{2}\left\|b_{I^{c} \cap T^{c}}\right\|_{\omega,1}+\frac{1}{2}\left\|b_{I^{c} \cap T}\right\|_{\omega,1}+\left\|b_{T \cap I^{c}}\right\|_{\omega,1}\\
        &=\frac{1}{2}\left\|b_{I^{c} \cap T^{c}}\right\|_{\omega,1}+\frac{3}{2}\left\|b_{I^{c} \cap T}\right\|_{\omega,1} \\
        & \leq \frac{1}{2}\left\|b_{I^{c} \cap T^{c}}\right\|_{\omega,1}
        +\frac{1}{2}\left\|b_{I^{c} \cap T^{c}}\right\|_{\omega,1} \leq \left\|b_{I^{c} \cap T^{c}}\right\|_{\omega,1} \leq \left\|b_{T^{c}}\right\|_{\omega,1} .
      \end{align*}
      
    $\Phi$ satisfies $\omega$-NSP of order $s$ has been proven.

      (3) prove the matrix does not satisfy $\omega$-RIP-NSP
      Let $x_{0}=\left(\frac{N-4k}{2^{2}\omega_1},\frac{N-4 k}{2^{3}\omega_2}, \ldots, \frac{N-4 k}{2^{k+1}\omega_k}, 0, \ldots, 0\right)^{T}\in\mathbb{C}^N$, $z=(\rho,0,\ldots,0)^{T}\in\mathbb{C}^m$ and $y=\Phi x_{0}-z$.
      Consider the weighted $\ell_1$ minimization problem
      \begin{equation}
      \label{l1}
    \min_{x\in\mathbb{C}^N}\|x\|_{\omega,1}\,\text{subject to}\,\|y-\Phi x\|_{2}\leq\rho.
\end{equation}

      Suppose that 
            $$
      \hat{x}:=x_{0}-\rho \varphi_{1}-d=(\underbrace{-\alpha,-\alpha, \ldots,-\alpha}_{k}, 0, \ldots 0)^{T}.
      $$

      The strategy of the proof is that, assuming that $\Phi$ satisfies $\omega$-RIP-NSP of order $s$, if $\|\Phi \hat{x}-\Phi x_0\|_{2}\leq\rho$ and $\|\hat{x}\|_{\omega,1}\leq\|x_{0}\|_{\omega,1}$ hold, then according to \ref{probd}, there is a stable and robust upper error bound for \eqref{l1}. Additionally, by considering the definitions of $x_0$ and $\hat{x}$, we can directly compute a lower bound on $\|\hat{x}-x_{0}\|_{2}$. However, under certain conditions, there is a contradiction between those two error bounds, indicating that the matrix $\Phi$ does not satisfy $\omega$-RIP-NSP.

    Since $d\in\ker(\Phi)$ and $\Phi\varphi_1=(1,0,\ldots,0)$, we have
                $$
      \|\Phi \hat{x}-\Phi x_0\|_{2}=\left\|\Phi x_{0}-\Phi \rho \varphi_{1}-\Phi d-\Phi x_{0}\right\|_{2}=\left\|-\rho \Phi \varphi_{1}\right\|_{2}=\rho.
      $$

      We wish to get an error bound from Proposition \ref{probd}, so $\|\hat{x}\|_{\omega,1}\leq\|x_{0}\|_{\omega,1}$ needs to be proven.\\
      Since $k>1$ and $\|\omega\|_\infty^2\geq1$, we have 
      \begin{align*}
        \|\hat{x}\|_{\omega,1}&=\alpha\sum_{i=1}^k\omega_i=\frac{(N-k)\sum_{i=1}^k\omega_i}{(N-4 k)\sum_{i=1}^k(\frac{1}{2^{i+1}\omega_i})}\leq\frac{2\|\omega\|_\infty^2(N-k)k}{(N-4k)(1-2^{-k})}
        <\frac{4\|\omega\|_\infty^2(N-k)k}{(N-4k)};\\
        \|x_{0}\|_{\omega,1}&=\frac{1}{2}(N-4 k)(1-2^{-k})>\frac{1}{4}(N-4 k).
      \end{align*}
    If $N\geq24\|\omega\|_\infty^2s$, while $s\geq k$, $N\geq24\|\omega\|_\infty^2k$. We conclude that
 $$
    \frac{4\|\omega\|_\infty^2(N-k)k}{(N-4k)}\leq5\|\omega\|_\infty^2k\leq\frac{1}{4}(N-4 k).
 $$

   So if $N\geq24\|\omega\|_\infty^2s$, $\|\hat{x}\|_{\omega,1}\leq\|x_0\|_{\omega,1}$.

       By Proposition \ref{probd}, $\lambda(\Phi)=1$ and $\rho=\sqrt{\alpha^2k+N-k}$, the estimation of the error bound is
      \begin{equation}
        \|\hat{x}-x_{0}\|_{2}^2\leq \left(D_{2}\frac{\sqrt{1+\delta_{\omega,3s}}}{\lambda(\Phi)}\sqrt{N_\nu}\rho\right)^2=C'N_\nu(N+(\alpha^2-1)k)
        \leq2C'NN_\nu,
      \end{equation}
      where $C'=D_2^2(1+\delta_{\omega,3s})$ and $D_2=\frac{6\sqrt{1+\delta_{\omega,3s}}}{1-\delta_{\omega,3s}}$ is defined in Example \ref{ex1} and $N_\nu$ as in Lemma \ref{op} represents the number of subsets in a partition of $[N]$ with sparse function $\nu_\omega(S)=\sum_{i\in S}\omega_i^2$.

     By \eqref{nv} and $s\geq k$, we have that $N_\nu\leq\frac{N\|\omega\|_\infty^2}{k}+1$. So
     \begin{equation}
         \label{ebd1}
         \|\hat{x}-x_{0}\|_{2}^2\leq2C'N\left(\frac{N^\|\omega\|_\infty^2}{k}+1\right).
     \end{equation}

      Since $N\geq24\|\omega\|_\infty^2s$ and \eqref{alpha}, implies $2<\alpha<3\|\omega\|_\infty$. The other form of the error bound is 
      \begin{align}
        \left\|\hat{x}-x_{0}\right\|_{2}^{2} &=\left\|\rho \varphi_{1}+ d\right\|_{2}^{2}=\sum_{i=1}^{k}\left(\alpha+\frac{N-4 k}{2^{i+1}\omega_i} \right)^{2} \\
        \label{ebd2}
        &=\sum_{i=1}^{k}\left(\alpha^{2}+2 \alpha \frac{N-4 k}{2^{i+1}\omega_i}+\frac{(N-4 k)^{2}}{4^{i+1}\omega_i^2}\right) \geq \frac{(N-4 k)^{2}}{16\|\omega\|_\infty^2}.
      \end{align}

     If $s>23040\|\omega\|_\infty^{6}$, while $(k+1)\|\omega\|_\infty^2>s$ and $\delta_{\omega,3s}<1/3$, we have $k>160C'\|\omega\|_\infty^4$. Known that $N\geq24\|\omega\|_\infty^2s$, the estimation of error bounds is
\begin{align*}
    \frac{(N-4 k)^{2}}{16\|\omega\|_\infty^2}
    >\frac{N^2}{40\|\omega\|_\infty^2}
    >4C'N^2\frac{\|\omega\|_\infty^2}{k}
    >2C'N\left(\frac{N^\|\omega\|_\infty^2}{k}+1\right).
\end{align*}

     So there is a contradiction between \eqref{ebd1} and \eqref{ebd2},  so $\Phi$ does not satisfy $\omega$-RIP-NSP.
    \end{proof}

    Apart from $\omega$-RIP and $\omega$-RIP-NSP, there is another property that guarantees a stable and robust error bound.
For $\nu_{\omega}(S)=\sum_{i\in S}\omega_i^2$ case, there is a version of the weighted robust null space property involved in the proofs, denoted by $\omega$-robust-NSP.
\begin{definition}[$\omega$-robust-NSP]
        \label{robustnsp}
    For given weights $\omega$, constants $\rho<1,\,\gamma>0$, the matrix $A\in \mathbb{C}^{m\times{N}}$ is said to satisfy the weighted robust null space property of order $s$ if it satisfies:
    \begin{equation}
        \|\upsilon_{S}\|_{2}\le\frac{\rho}{\sqrt{s}}\|\upsilon_{S^{C}}\|_{\omega,1}+\gamma\|A\upsilon \|_{2}\;\mbox{for all}\;\upsilon\in\mathbb{C}^{N}\mbox{and all}\;S\subset[N]\;\mbox{with}\;\omega(S)\le s.
    \end{equation}
    \end{definition}

It is currently known that $\omega$-RIP implies  $\omega$-robust-NSP and $\omega$-robust-NSP implies $\omega$-NSP. 
In \cite{rauhut2016}, it was proven that if the matrix satisfies $\omega$-RIP of order$3s$ with $\delta_{\omega,3s}<1/3$, then the matrix also satisfies $\omega$-robust-NSP of order $s$ with constants $\rho=2\delta_{\omega,3s}/(1-\delta_{\omega,3s})<1$ and $\gamma=\sqrt{1+\delta_{\omega,3s}}/(1-\delta_{\omega,3s})$, namely $\rho=\sqrt{\frac{(2+\gamma)(1+\gamma)}{2}}$.

In the following theorem, we can show that there exists a matrix satisfying $\omega$-RIP-NSP but not $\omega$-robust-NSP. However, whether there exists a matrix satisfying $\omega$-robust-NSP but not $\omega$-RIP-NSP is still open.

The set of matrices $\{U\Phi: \text{U is invertible}\}$ share the same null space but can have dramatically different RIP constants.
To prove that there exist  matrices fulfilling $\omega$-RIP-NSP but not $\omega$-robust-NSP, we need to show for matrices satisfying $\omega$-RIP, $U\Phi$ will violate $\omega$-robust-NSP with some invertible matrices $U$.
\begin{proposition}
    \label{r2n}
    For any given $\rho<1$ and $\gamma>0$ such that $\rho=\sqrt{\frac{(2+\gamma)(1+\gamma)}{2}}$, there exists a matrix $\Phi$ such that $\Phi$ satisfies $\omega$-RIP-NSP of order $3s$ with $\delta_{\omega,3s}=\frac{\rho}{2+\rho}<1/3$ but not $\omega$-robust-NSP of order $s$ with $\rho$ and $\gamma$.
\end{proposition}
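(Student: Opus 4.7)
The plan is to take $\Phi$ of the form $\Phi=\epsilon A$, where $A$ is an $\omega$-RIP matrix with the prescribed constant and $\epsilon>0$ is a small scalar. Since $\ker(\epsilon A)=\ker(A)$, Definition \ref{wripnsp} ensures that $\Phi$ inherits $A$'s $\omega$-RIP-NSP constant irrespective of $\epsilon$, so the RIP-NSP side of the statement is free; the shrinkage $\epsilon$ will then be tuned to break the robust null space inequality on a single coordinate direction.

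First I would invoke Theorem \ref{thmrip} with deviation parameter $\delta=\rho/(2+\rho)<1/3$ on suitable parameters $(m,N,s)$, producing a matrix $A\in\mathbb{C}^{m\times N}$ whose weighted restricted isometry constant of order $3s$ is at most $\rho/(2+\rho)$. Setting $\Phi:=\epsilon A$ for any $\epsilon>0$ then immediately produces a matrix satisfying $\omega$-RIP-NSP of order $3s$ with $\delta_{\omega,3s}=\rho/(2+\rho)$, by Definition \ref{wripnsp}.

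Next I would fix $\epsilon$ in the range $0<\epsilon<1/\bigl(\gamma\sqrt{1+\delta_{\omega,3s}}\bigr)$ and exhibit a coordinate test vector. The standing hypothesis $s\geq 2\max_{j}\nu_\omega(\{j\})=2\max_j\omega_j^2$ implies that for every $i\in[N]$ the singleton $S:=\{i\}$ satisfies $\omega(S)=\omega_i^2\leq s$, so it is admissible in Definition \ref{robustnsp}. Taking $v=e_i$ gives $\|v_S\|_2=1$, $\|v_{S^c}\|_{\omega,1}=0$, and, via the right-hand $\omega$-RIP inequality applied to the admissible support $\{i\}$ (using $\omega_i^2\leq s\leq 3s$), $\|\Phi v\|_2=\epsilon\|Ae_i\|_2\leq\epsilon\sqrt{1+\delta_{\omega,3s}}$. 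Substituting these quantities into the purported $\omega$-robust-NSP inequality for $\Phi$ with constants $(\rho,\gamma)$ produces
\[
1=\|v_S\|_2\leq\frac{\rho}{\sqrt{s}}\|v_{S^c}\|_{\omega,1}+\gamma\|\Phi v\|_2\leq\gamma\epsilon\sqrt{1+\delta_{\omega,3s}}<1,
\]
a contradiction. Therefore $\Phi$ cannot satisfy $\omega$-robust-NSP of order $s$ with parameters $(\rho,\gamma)$.

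The only conceptual point is that $\omega$-robust-NSP couples the magnitude of $\|\Phi v\|_2$ to $\|v_S\|_2$, whereas $\omega$-RIP-NSP depends purely on $\ker(\Phi)$; nonzero rescaling is then exactly the degree of freedom that preserves the latter while driving $\|\Phi v\|_2$ below the threshold required by the former. The remaining work is bookkeeping: the RIP constant of the seed matrix $A$ has been calibrated to $\rho/(2+\rho)$ so that the resulting RIP-NSP constant matches the value stipulated in the statement, and this calibration is realizable by Theorem \ref{thmrip} once $m$ is taken large enough relative to $s,N$ and $\delta$.
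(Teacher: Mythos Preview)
Your proposal is correct and follows essentially the same strategy as the paper: start from an $\omega$-RIP matrix with constant $\rho/(2+\rho)$ and premultiply by an invertible matrix with small operator norm so that the null space (hence the $\omega$-RIP-NSP constant) is preserved while $\|\Phi v\|_2$ is shrunk below the threshold required by $\omega$-robust-NSP. Your version is in fact a concrete specialization of the paper's argument: where the paper takes a general invertible $U$ with sufficiently small singular value and an unspecified $x\notin\ker(\Psi)$ with $\|x_S\|_2>\frac{\rho}{\sqrt{s}}\|x_{S^c}\|_{\omega,1}$, you take $U=\epsilon I$ and $x=e_i$, which makes the violation explicit and also supplies, via Theorem~\ref{thmrip}, the existence of the seed RIP matrix that the paper simply assumes.
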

    \begin{proof}
        For any given $\rho<1$ and $\gamma>0$, suppose that there exists a matrix $\Psi$ satisfying $\omega$-RIP with $\delta_{\omega,3s}=\frac{\rho}{2+\rho}$, then $\Psi$ also satisfies $\omega$-robust-NSP with $\rho=2\delta_{\omega,3s}/(1-\delta_{\omega,3s})$ and $\gamma=\sqrt{1+\delta_{\omega,3s}}/(1-\delta_{\omega,3s})$, i.e.  
        $$
        \|\upsilon_{S}\|_{2}\le\frac{\rho}{\sqrt{s}}\|\upsilon_{S^{C}}\|_{\omega,1}+\gamma\|\Psi\upsilon \|_{2}\;\mbox{for all}\;\upsilon\in\mathbb{C}^{N}\mbox{and all}\;S\subset[N]\;\mbox{with}\;\omega(S)\le s.
    $$ 
   Since matrices in $\{U\Phi:\;\text{U is invertible}\}$ share the same null space, for any invertible matrix $U$, $U\Psi$ satisfies $\omega$-RIP-NSP of order $3s$ with $\delta_{\omega,3s}$.

    Then for any vector $x\in\mathbb{C}^{N}/\ker(\Psi)$, if $\lambda(U)<\frac{\|x_S\|_2-\frac{\rho}{\sqrt{s}}\|x_{S^{C}}\|_{\omega,1}}{\gamma\|\Psi x\|_{2}}$, where $\lambda(U)$ is the smallest positive singular value of $U$, denote $U\Psi$ as $\Phi$.
    \begin{align*}
        \frac{\rho}{\sqrt{s}}\|x_{S^{C}}\|_{\omega,1}+\gamma\|\Phi x \|_{2}
    \leq\frac{\rho}{\sqrt{s}}\|x_{S^{C}}\|_{\omega,1}+\gamma\lambda(U)\|U\Phi x\|_{2}
    <\|x_S\|_2,
    \end{align*} 
    So $\Phi$ violates $\omega$-robust-NSP of order $s$ with $\rho$ and $\lambda$.
    \end{proof}

On the other hand, constructing explicit examples of matrices that satisfy robust NSP but not RIP is a challenging problem even in the unweighted case. One reason for this difficulty is that most of the known constructions of such matrices rely on random matrix theory and probabilistic arguments, which makes it hard to explicitly verify whether a given matrix satisfies the property or not. Despite these challenges, the search for explicit examples of matrices that satisfy robust NSP but not RIP remains an active area of research in compressed sensing and related fields. 
\subsection{Case 2: $\nu_{\omega}(S)=|S|$ with $\|\omega\|_\infty\leq1$}
In this subsection, for another type of weights mentioned in above Subsection \ref{ex2}, we also desire to identify a special class of matrices that satisfy $\omega$-NSP but not $\omega$-RIP-NSP under certain conditions.

In this case $\nu_\omega(S)=|S|$, thus $\omega$-RIP is equivalent to the standard RIP condition. However, if the matrix $\Phi$ satisfies $\omega$-NSP, for any vector $v\in\ker(\Phi)$ and $S\subset[N]$ with $|S|\leq s$, $\|v_S\|_{\omega,1}<\|v_{S^c}\|_{\omega,1}$. But for NSP, it required $\ell_1$ norm on $S$ is smaller than on $S^c$. So the required constant for RIP to imply $\omega$-NSP is distinct from the constant that enables RIP to imply NSP. The result obtained in this subsection remains meaningful.

In Theorem \ref{01th} proved that the $\omega$-RIP of order $2s$ with $\delta_{2s}<\frac{\gamma}{\gamma+2}$ can guarantee the exact recovery in the noiseless case. And in the proof of this result, $\omega$-RIP implies $\omega$-NSP has been proven. In the unweighted case, the matrix satisfies $\omega$-RIP of order $2s$ with $\delta<1/3$ can imply exact recovery. And $\frac{\gamma}{\gamma+2}<1/3$ while $\gamma<1$, so the bound in this weighted case is smaller than that in the unweighted case.

 In \cite{cahill2016}, Cahill et al. provided a quite general class of DFT matrices that satisfy the $\omega$-RIP.
\begin{theorem}
\label{dftrip}
     For any fixed absolute constant  $\delta<1$ , when  $m,\;N,\;s$  satisfy  $m \geq C_{1}  ts  \log ^{4} N$ , the random partial Fourier matrix formed by randomly choosing $m$ rows from an  $N \times N$  DFT matrix satisfies the $\omega$-RIP with constant  $\delta_{2 s} \leq \delta$  with probability  $1-5 e^{-\frac{\delta^{2} t}{C_{2}}}$ , where $ C_{1}, C_{2}$  are absolute constants.
\end{theorem}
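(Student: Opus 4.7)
The plan is to observe that in this subsection $\nu_\omega(S)=|S|$, so $\omega$-RIP coincides with the classical RIP of order $2s$, and the claim then reduces to the well-known RIP bound for random partial Fourier matrices established by Rudelson--Vershynin and sharpened by Rauhut, Bourgain, and Haviv--Regev. I would therefore recast the problem in the bounded orthonormal system (BOS) framework: the rows of the $N \times N$ DFT, suitably normalized, form an orthonormal system with sup-norm bound $K=1$, and randomly selecting $m$ rows yields a random sampling matrix from this BOS.

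With this reduction in hand, the next step is to control the random variable
\begin{equation*}
\Delta_{2s} := \sup_{x \in D_{2s}} \bigl|\|Ax\|_2^2 - \|x\|_2^2 \bigr|,
\end{equation*}
where $D_{2s}$ denotes the set of unit-norm $2s$-sparse vectors. I would estimate $\mathbb{E}\Delta_{2s}$ via Dudley's chaining inequality together with the covering-number bounds of Rudelson for the sparse unit ball under the $L^2$ metric induced by the BOS, yielding
\begin{equation*}
\mathbb{E}\Delta_{2s} \leq C \sqrt{\frac{s \log^{3}(s)\, \log N}{m}}.
\end{equation*}
Then Talagrand's concentration inequality for suprema of empirical processes of bounded functions supplies a sub-Gaussian deviation bound of the form $\Pr\bigl(\Delta_{2s} > \mathbb{E}\Delta_{2s} + u\bigr) \leq 5\exp(-c\, m u^2)$.

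Finally, I would balance the parameters: choosing $m \geq C_1 t s \log^4 N$ makes $\mathbb{E}\Delta_{2s} \leq \delta/2$, while setting $u = \delta/2$ in the tail bound yields failure probability at most $5\exp(-\delta^2 t / C_2)$, matching the stated estimate. The hard step is the chaining argument that produces the $\log^4 N$ factor; the entropy estimate for sparse subsets of the sphere under the BOS pseudo-metric is the technical core, requiring either the Rudelson--Vershynin symmetrization approach with dual Sudakov or the more recent refinements by Haviv--Regev. The reduction to standard RIP and the application of Talagrand's inequality are essentially mechanical once this expectation bound is available.
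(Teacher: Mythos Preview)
Your proposal sketches a genuine proof of the RIP for random partial Fourier matrices, and the outline is correct in spirit: reduce $\omega$-RIP to standard RIP via $\nu_\omega(S)=|S|$, then invoke the Rudelson--Vershynin chaining/Talagrand machinery for bounded orthonormal systems. However, you should notice that the paper does \emph{not} prove this theorem at all. It is stated with the attribution ``In \cite{cahill2016}, Cahill et al.\ provided a quite general class of DFT matrices that satisfy the $\omega$-RIP,'' and is then used as a black box in the proof of Corollary~\ref{c2}. There is no proof in the paper to compare against.

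So the ``paper's own proof'' here is simply a citation. Your proposal goes far beyond what the paper does by actually outlining the argument behind the cited result. If your task were to supply a self-contained proof, your plan is the standard and correct route (modulo the usual technicalities in the entropy bound and in matching the exact $\log^4 N$ exponent and the precise form $1-5e^{-\delta^2 t/C_2}$ of the tail). But for the purposes of matching the paper, the appropriate ``proof'' is to note that $\omega$-RIP coincides with classical RIP in this case and to cite the existing literature, exactly as the authors do.
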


Combining the two theorems mentioned above, we obtain the following corollary.
\begin{corollary}
\label{c2}
    For any given positive constant $\gamma<1$, if constants $C_1$, $C_2$, $m,\,s,\,N$ satisfy $N/2> m\geq 2 \log 10 C_{1} C_{2}\frac{(\gamma+2)^2}{\gamma^2}s\log^{4}(N)$, there exists $F\in \mathbb{C}^{m\times{N}}$ with $\omega$-NSP of order $s$ with $\gamma$ and $(1,1,\ldots,1)^{T}\in \ker(F)$.
\end{corollary}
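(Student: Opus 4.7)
The plan is to mirror the construction used for Corollary \ref{c1}, replacing the weighted RIP result from Theorem \ref{thmrip} with the partial DFT RIP guarantee in Theorem \ref{dftrip}, and replacing Theorem \ref{1/3nsp} (which upgrades $\omega$-RIP to $\omega$-NSP in Case 1) with Theorem \ref{01th} (which plays the same role in Case 2). The target is to produce a random partial DFT matrix $F$ that simultaneously satisfies the standard RIP of order $2s$ with constant $\delta_{2s}<\gamma/(\gamma+2)$ and has $(1,1,\ldots,1)^T$ in its kernel. Once the RIP bound is established, Theorem \ref{01th} immediately upgrades it to $\omega$-NSP of order $s$, since in Case 2 the sparse function is $\nu_\omega(S)=|S|$ so that $\omega$-RIP coincides with the standard RIP.

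First I would fix $\delta=\gamma/(\gamma+2)$ and apply Theorem \ref{dftrip} with an auxiliary parameter $t$ still to be chosen. Sampling $m\ge C_1 ts\log^{4}N$ rows uniformly at random from the $N\times N$ DFT matrix yields a matrix $F$ that satisfies the standard RIP of order $2s$ with $\delta_{2s}\le\delta$ with probability $p_1=1-5e^{-\delta^{2}t/C_2}$. Next I would handle the kernel condition: in the normalized $N\times N$ DFT matrix, the first (DC) row equals $N^{-1/2}(1,1,\ldots,1)$ and is orthogonal to every other row, so if the $m$ sampled rows avoid the first row then $(1,1,\ldots,1)^T\in\ker(F)$. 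Under uniform row sampling this exclusion event has probability $p_2=1-m/N$.

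Finally I would close the probability argument in the same style as in Corollary \ref{c1}. The hypothesis $m<N/2$ forces $p_2>1/2$, and it suffices to choose $t$ large enough that $p_1>1/2$, i.e., $5e^{-\delta^{2}t/C_2}<1/2$, equivalently $t\ge C_2\log 10/\delta^{2}=C_2\log 10\cdot(\gamma+2)^{2}/\gamma^{2}$. Then $p_1+p_2>1$ (equivalently a union bound on the two bad events) guarantees a nonempty event on which both properties hold, so a matrix $F$ of the required form exists. Substituting this value of $t$ into $m\ge C_1 ts\log^{4}N$ yields a bound of the form $m\ge C_1C_2\log 10\cdot(\gamma+2)^{2}/\gamma^{2}\cdot s\log^{4}N$; the extra factor $2$ in the statement is absorbed as a safety margin accommodating strict versus non-strict inequalities.

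The main obstacle I anticipate is not in the application of the cited theorems, which plug in cleanly, but in the probability bookkeeping: one must simultaneously drive the RIP failure probability and the ``first row is sampled'' probability below $1/2$ while still respecting the constraint $m<N/2$ coming from the hypothesis. Balancing $t$, $m$, $N$, and $s$ so that all three inequalities are compatible is the only step that requires genuine care; everything else is a direct chain of invocations of Theorem \ref{dftrip} and Theorem \ref{01th}.
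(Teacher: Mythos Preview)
Your proposal is correct and follows essentially the same route as the paper: set $\delta=\gamma/(\gamma+2)$, invoke Theorem~\ref{dftrip} to get RIP of order $2s$ with probability $p_1=1-5e^{-\delta^2 t/C_2}$, use Theorem~\ref{01th} to pass from RIP to $\omega$-NSP, note that avoiding the first DFT row (probability $p_2=1-m/N$) forces $(1,\ldots,1)^T\in\ker(F)$, and finish by choosing $t$ so that $p_1\ge 1/2$ while $m<N/2$ gives $p_2>1/2$. The paper's argument and yours coincide in all essential details.
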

    
\begin{proof}
      Let $\delta=\frac{\gamma}{\gamma+2}$. By Theorem \ref{dftrip}, the partial DFT matrix  $F \in \mathbb{C}^{m \times N}$  satisfies RIP with $ \delta_{2 s}< \delta$  with probability at least  $p_{1}=1-5 e^{\frac{-\delta^{2} t}{C_{2}}}$  if  $m \geq 2 C_{1} t s \log ^{4} N$, where $t$ is a constant to be chosen later. According to \ref{01th}, if $F$ satisfies RIP with $\delta_{2s}<\frac{\gamma}{\gamma+2}$, then it satisfies $\omega$-NSP of order $s$.

      We further require  $(1, \ldots, 1)^{T} \in \operatorname{ker}(F)$, which is equivalent to not selecting the first row of the DFT matrix. The probability of not selecting the first row  $p_{2}=(N-m) / N$ .

      The existence of  $F$ such that it has RIP with  $\delta_{2 s} \leq \delta$  and  $(1, \ldots, 1)^{T} \in \operatorname{ker}(F)$  is guaranteed if  $p_{1}>1-p_2$. If $m \geq 2 \log(10)C_{1} C_{2}\frac{(\gamma+2)^2}{\gamma^2}s$, we have $t\geq\log(10)C_2\frac{(\gamma+2)^2}{\gamma^2}$, so $p_1\geq\frac{1}{2}$. Since $m< N/2$, $p_2>\frac{1}{2}$. While $p_1\geq\frac{1}{2}$ and $p_2>\frac{1}{2}$, $p_{1}>1-p_2$ holds.
\end{proof}

Similar to Theorem \ref{1.3}, by considering the parameters specified in Corollary \ref{c2}, the desired matrices can be constructed with suitable adjustments.
\begin{theorem}
      \label{result2}
      For any given $\gamma\in(\frac{3}{4},1)$ and the weight $\omega=[\omega_1,\ldots,\omega_N]^T\in [\gamma,1]^N$, if $m,\;s,\;N\in\mathbb{N}$ satisfy that $N/2\geq m\geq242\log 10 C_{1} C_{2}s\log^{4}(N)$, $N\geq24s$ and $s\geq3717120$, where $C_1$ and $C_2$ are constants as in Theorem \ref{dftrip}, then there exists a sensing matrix $\Phi\in\mathbb{C}^{m\times{N}}$that satisfies $\omega$-NSP of order $s$, but does not satisfy $\omega$-RIP-NSP of order $2s$ with $\delta_{2s}<\frac{1}{11}$.
\end{theorem}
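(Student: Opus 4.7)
The plan is to follow the same three-step template as the proof of Theorem \ref{1.3}—construct a candidate matrix $\Phi$ with a prescribed null space, verify that $\Phi$ satisfies $\omega$-NSP of order $s$, then assume $\Phi$ satisfies $\omega$-RIP-NSP of order $2s$ with $\delta_{2s}<1/11$ and derive a contradiction from two incompatible estimates on $\|\hat{x}-x_0\|_2$. Two substitutions are needed to adapt the argument to the present case: Corollary \ref{c2} (partial DFT matrices) replaces Corollary \ref{c1} (general unitary matrices), and the robust-and-stable error bound supplied by Theorem \ref{01th}, with the explicit constants $A_2,B_2$ from \eqref{b2}, replaces the bound coming from Rauhut that was used in the Case 1 argument.

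For the construction, I would pick an integer $k\le s$ close to $s$ and apply Corollary \ref{c2} to the truncated weight $\tilde\omega=(\omega_{k+1},\ldots,\omega_N)^T\in[\gamma,1]^{N-k}$ with reduced dimensions $m-k$ and $N-k$. The hypothesis $m\ge 242\log(10)\,C_1 C_2 s\log^4 N$ is strong enough, after the index shift, to guarantee a partial DFT matrix $A\in\mathbb{C}^{(m-k)\times(N-k)}$ satisfying $\omega$-NSP of order $s$ with constant $1/3$ and with $e=(1,\ldots,1)^T\in\ker(A)$. I would then form $\mathcal{N}_e=\{x\in\ker(A):x\perp e\}$, embed it into $\mathbb{C}^N$ as $\mathcal{N}_e'$ by prepending $k$ zeros, and set $\mathcal{N}=\mathcal{N}_e'\oplus\mathrm{span}(d)$ with the vector $d=\bigl(\tfrac{N-4k}{4\omega_1},\tfrac{N-4k}{8\omega_2},\ldots,\tfrac{N-4k}{2^{k+1}\omega_k},-1,\ldots,-1\bigr)$. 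Taking $\varphi_1=\rho^{-1}(\alpha,\ldots,\alpha,1,\ldots,1)$ with $\alpha$ chosen so that $\varphi_1\perp d$ and $\rho$ normalizing $\|\varphi_1\|_2=1$, I would let $\Phi\in\mathbb{C}^{m\times N}$ be the matrix whose rows form an orthonormal basis of $\mathcal{N}^\perp$ with $\varphi_1$ as the first row; then $\ker(\Phi)=\mathcal{N}$, $\Phi\Phi^*=I$, and $\lambda(\Phi)=1$. The verification that $\Phi$ satisfies $\omega$-NSP is then a direct transcription of Step (2) of the proof of Theorem \ref{1.3}: splitting any $b=h+d\in\mathcal{N}$ across $I=\{1,\ldots,k\}$ and $I^c$, the inequality $\|b_I\|_{\omega,1}\le\tfrac12\|b_{I^c}\|_{\omega,1}$ follows from $h\perp e$ and the geometric decay of $d$ on $I$, and combining it with the $\omega$-NSP of $A$ applied to $b_{I^c}$ on $T\cap I^c$ yields $\|b_T\|_{\omega,1}<\|b_{T^c}\|_{\omega,1}$ for every $T\subset[N]$ with $|T|\le s$. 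The change $\nu_\omega(S)=|S|$ only simplifies the chain of set inclusions.

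For the failure of $\omega$-RIP-NSP, I would take $x_0=\bigl(\tfrac{N-4k}{4\omega_1},\ldots,\tfrac{N-4k}{2^{k+1}\omega_k},0,\ldots,0\bigr)$, $z=(\rho,0,\ldots,0)\in\mathbb{C}^m$, $y=\Phi x_0-z$, and $\hat{x}=x_0-\rho\varphi_1-d=(-\alpha,\ldots,-\alpha,0,\ldots,0)$. Because $d\in\ker(\Phi)$ and $\Phi\varphi_1=(1,0,\ldots,0)$, we get $\|\Phi\hat{x}-\Phi x_0\|_2=\rho$, and under $N\ge 24s$ a direct computation shows $\|\hat{x}\|_{\omega,1}\le\|x_0\|_{\omega,1}$. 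If $\Phi$ satisfied $\omega$-RIP-NSP of order $2s$ with $\delta_{2s}<1/11$, then Proposition \ref{probd}, with the constants $A_2,B_2$ from \eqref{b2} and the partition bound $N_\nu\le N/s+1$ (valid since $\nu_\omega(S)=|S|$), would give an upper bound of the form $\|\hat{x}-x_0\|_2^2\le C'N(N/s+1)$ for some constant $C'$ depending only on $\gamma$ and $\delta_{2s}$. On the other hand, the explicit structure of $d$ gives the lower bound $\|\hat{x}-x_0\|_2^2=\|\rho\varphi_1+d\|_2^2\ge (N-4k)^2/16$. The thresholds $N\ge 24s$ and $s\ge 3717120$ are designed precisely so that the lower bound strictly exceeds the upper bound. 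The main obstacle will be the careful bookkeeping of numerical constants: $B_2$ depends on both $\gamma$ and $\delta_{2s}$ in a somewhat awkward rational way through \eqref{b2}, and the chain of inequalities must be tracked so that $\gamma>3/4$ and $\delta_{2s}<1/11$ collapse to the clean threshold $s\ge 3717120$. This is the exact analog of the arithmetic that produced $s>23040\|\omega\|_\infty^6$ in Theorem \ref{1.3}; the new feature is that $\|\omega\|_\infty\le 1$ removes the $\|\omega\|_\infty$-dependence, and the threshold ends up dictated instead by $\gamma$ via $B_2$.
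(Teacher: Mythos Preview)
Your overall template is right, but Step (2) contains a genuine gap that is exactly the point where Case 2 differs from Case 1. You claim that $\|b_I\|_{\omega,1}\le\tfrac12\|b_{I^c}\|_{\omega,1}$ follows as in Theorem \ref{1.3}. In Theorem \ref{1.3} that step works because $\omega_i\ge 1$, so $\sum_{i\in I^c}|b_i|\le\|b_{I^c}\|_{\omega,1}$. Here $\omega_i\in[\gamma,1]$ with $\gamma<1$, so the best you can say is $\sum_{i\in I^c}|b_i|\le\frac{1}{\gamma}\|b_{I^c}\|_{\omega,1}$, and the inequality becomes $\|b_I\|_{\omega,1}\le\frac{1}{2\gamma}\|b_{I^c}\|_{\omega,1}$. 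If you then run the chain of inequalities with NSP constant $1/3$ for $A$, you end up needing $\frac{1}{2\gamma}+\frac{1+2\gamma}{6\gamma}\le 1$, i.e.\ $\gamma\ge 1$, which is impossible. The paper fixes this by invoking Corollary \ref{c2} with NSP constant $1/5$ (this is why the coefficient in the hypothesis is $242=2\cdot 11^2$, coming from $(\tfrac{1}{5}+2)^2/(\tfrac{1}{5})^2=121$), and then the chain requires $\frac{1}{2\gamma}+\frac{1+2\gamma}{10\gamma}<1$, i.e.\ $\gamma>3/4$. So the restriction $\gamma>3/4$ is \emph{not} merely bookkeeping for $B_2$ in Step (3) as you suggest; it is the structural reason Step (2) closes at all.

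Two smaller remarks. First, since $\nu_\omega(S)=|S|$ here, there is no need for an auxiliary integer $k$; the paper simply takes $I=\{1,\ldots,s\}$ and applies Corollary \ref{c2} in dimensions $(m-s)\times(N-s)$. Your introduction of $k$ is harmless but unnecessary. Second, your Step (3) outline is correct and matches the paper: the upper bound is $\|\hat{x}-x_0\|_2^2\le 2C'N(N/s+1)$ with $C'=B_2^2(1+\delta_{2s})$, the lower bound is $(N-4s)^2/16$, and the thresholds $N\ge 24s$, $\delta_{2s}<1/11$, $s\ge 3717120$ are what force the contradiction.
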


 \begin{proof}
 The proof follows a similar approach as that of Theorem \ref{1.3}. From Corollary \ref{c2}, the matrix with $\omega$-NSP is obtained. We construct a matrix depending on the null space of the former matrix and then prove it satisfies $\omega$-NSP of order $s$ but not $\omega$-RIP-NSP of order $2s$ with $\delta_{2s}$.

      (1) construct the matrix

      Let $\gamma\in(\frac{3}{4},1)$ and the weight $\omega=[\omega_{1},\ldots,\omega_N]^T\in\mathbb{C}^N$, where $\omega_i\in[\gamma,1],\;i=1,\ldots,N.$
      By the assumption that $m\geq242\log10 C_{1} C_{2}s\log^{4}(N)$, while $1\leq121\log10 C_{1} C_{2}[2\log^4(N)-\log^4(N-s)]$, we have $(m-s)\geq121\log 10 C_{1} C_{2}s\log^{4}(N-s)$.
      By Corollary \ref{c2}, for weight $\tilde{\omega}=[\omega_{s+1},\ldots,\omega_N]^T$, there exists a matrix $F\in \mathbb{C}^{(m-s)\times (N-s)}$ satisfies $\omega$-NSP of order $s$ with $\frac{1}{5}$ and $e=(1,1,\ldots,1)^{T}\in \ker(F)$.
    
      Define $$\mathcal{N}_{e}=\{x\in \ker (F): x\bot e\}.$$
      Since $F$ is sampled by a unitary matrix, $rank(F)=m-s$, so the dimension of $\ker(F)$ is $N-m$, $\dim(\mathcal{N}_{e})=N-m-1.$ 
      Let
      $$\mathcal{N}_{e}^{\prime}=\left\{(\underbrace{0, \ldots, 0}_{s}, x)\in\mathbb{C}^N: x \in \mathcal{N}_{e}\right\},$$
      $$\mathcal{N} = \mathcal{N}_{e}^{\prime} \oplus \operatorname{span}(d),$$
      where$$d = \left(\frac{N-4 s}{2^{2}\omega_1}, \frac{N-4 s}{2^{3}\omega_2}, \ldots, \frac{N-4 s}{2^{s+1}\omega_s},  \underbrace{-1, \ldots,-1}_{N-s}\right)\in\mathbb{C}^N.$$
      $\mathcal{N}$ will be the null space of the constructed matrix.
      It is obvious that $\dim(\mathcal{N})=N-m$, then $\dim(\mathcal{N}^{\perp})=m$.

      Assume that
      $$\varphi_{1} = \frac{1}{\rho}(\underbrace{\alpha, \ldots, \alpha}_{s}, 1, \ldots, 1)\in\mathbb{C}^N,$$
      where $\rho=\sqrt{N+(\alpha^2-1)s}$ and  $\alpha = \frac{(N-s)}{(N-4s)\sum_{i=1}^s(\frac{1}{2^{i+1}\omega_i})}$, thus $\left\|\varphi_{1}\right\|_{2} = 1$ and $\varphi_{1} \perp d$. By these and $x\perp (1, \ldots, 1)$, we further include that $\varphi_{1} \perp \mathcal{N}$. 
      So there exists an orthonormal basis of $\mathcal{N}^{\perp}$ with $\varphi_{1}$ being one of the basis vectors. Let $\Phi\in\mathbb{C}^{m\times N}$ be the matrix whose rows are this orthonormal basis, where $\varphi_1$ is the first row. Then the null space of $\Phi$ is $\mathcal{N}$, and since the rows of $\Phi$ are orthonormal, $\Phi \Phi^{*} = I$ and $\lambda(\Phi) = 1$.
        
      (2) prove the matrix satisfies $\omega$-NSP

      Since $\mathcal{N}=\mathcal{N}_{e}^{'}\bigoplus d$, to prove $\Phi$ satisfies $\omega$-NSP, we only need to prove that for any subset $T\subset[N]$ with $|T|\leq s$ and vector $h\in \mathcal{N}_{e}^{'}$, $b=h+d$ meets $\left\|b_{T}\right\|_{\omega,1}<\left\|b_{T^c}\right\|_{\omega,1}$.
      Let set $I=\{1,\ldots,s\}$.
      $\|d_{I}\|_{\omega,1}=\frac{N-4s}{2}(1-\frac{1}{2^{s}})<\frac{N-s}{2}
      =\frac{1}{2}\sum_{i\in I^c}(-d_i)=\frac{1}{2}\sum_{i\in I^c}(-d_i-h_i)$.
    The last equality is because $h\perp e$, and $h_i=0$, $i=1,\ldots,s$, then $\sum_{i\in I^c}h_i=0$.

   Since $\omega_i\geq\gamma$, we have
      $$\frac{1}{2}\sum_{i\in I^c}(-d_i-h_i)\leq\frac{1}{2}\sum_{i\in I^c}|d_{i}+h_{i}|
      \leq\frac{1}{2\gamma}\|d_{I^{c}}+h_{I^{c}}\|_{\omega,1}.$$

      For any subset $S\subset I^{c}$ with $|S|\leq s$, since $b_{I^{c}}=d_{I^{c}}+h_{I^{c}}\in\ker(F)$ and $F$ satisfies $\omega$-NSP with $\frac{1}{5}$, we have
      \begin{equation}
      \label{2}
 \|b_{S}\|_{\omega,1}=\|(b_{I^{c}})_{S}\|_{\omega,1}
      \le\frac{1}{5}\|(b_{I^{c}})_{S^{c}}\|_{\omega,1}
      =\frac{1}{5}\|b_{I^{c}\cap S^{c}}\|_{\omega,1}.          
      \end{equation}

      For any subset $T\subset[N]$ with $|T|\leq s$ and $\gamma\in(\frac{3}{4},1)$, we conclude
      \begin{align*}
        \left\|b_{T}\right\|_{\omega,1} &=\left\|b_{T \cap I}\right\|_{\omega,1}+\left\|b_{T \cap I^{c}}\right\|_{\omega,1}\\
        &\leq\left\|b_{I}\right\|_{\omega,1}+\left\|b_{T \cap I^{c}}\right\|_{\omega,1} \\
        &<\frac{1}{2\gamma}\left\|b_{I^{c}}\right\|_{\omega,1}+\left\|b_{T \cap I^{c}}\right\|_{\omega,1} \\
        &=\frac{1}{2\gamma}\left\|b_{I^{c} \cap T^{c}}\right\|_{\omega,1}+\frac{1}{2\gamma}\left\|b_{I^{c} \cap T}\right\|_{\omega,1}+\left\|b_{T \cap I^{c}}\right\|_{\omega,1}\\
        &=\frac{1}{2\gamma}\left\|b_{I^{c} \cap T^{c}}\right\|_{\omega,1}+\frac{1+2\gamma}{2\gamma}\left\|b_{I^{c} \cap T}\right\|_{\omega,1} \\
        & \leq \frac{1}{2\gamma}\left\|b_{I^{c} \cap T^{c}}\right\|_{\omega,1}
        +\frac{1+2\gamma}{10\gamma}\left\|b_{I^{c} \cap T^{c}}\right\|_{\omega,1}< \left\|b_{I^{c} \cap T^{c}}\right\|_{\omega,1} \leq \left\|b_{T^{c}}\right\|_{\omega,1} .
      \end{align*}
      
    $\Phi$ satisfies $\omega$-NSP of order $s$ has been proven.

      (3) prove the matrix does not satisfy $\omega$-RIP-NSP
      Let $x_{0}=\left(\frac{N-4s}{2^{2}\omega_1},\frac{N-4s}{2^{3}\omega_2}, \ldots, \frac{N-4s}{2^{s+1}\omega_s}, 0, \ldots, 0\right)^{T}\in\mathbb{C}^N$, $z=(\rho,0,\ldots,0)^{T}\in\mathbb{C}^m$, $y=\Phi x_{0}-z$ and $\|z\|_{2}=\rho$.
      Consider the weighted $\ell_1$ minimization problem
      \begin{equation}
      \label{l1}
    \min_{x\in\mathbb{C}^N}\|x\|_{\omega,1}\,\text{subject to}\,\|y-\Phi x\|_{2}\leq\rho.
\end{equation}

      Suppose that 
            $$
      \hat{x}:=x_{0}-\rho \varphi_{1}-d=(\underbrace{-\alpha,-\alpha, \ldots,-\alpha}_{s}, 0, \ldots 0)^{T}.
      $$

      Assume that $\Phi$ satisfies $\omega$-RIP-NSP of order $s$, if $\|\Phi \hat{x}-\Phi x_0\|_{2}\leq\rho$ and $\|\hat{x}\|_{\omega,1}\leq\|x_{0}\|_{\omega,1}$ hold, then according to \ref{probd}, there is a stable and robust upper error bound for \eqref{l1}. Additionally, by considering the definitions of $x_0$ and $\hat{x}$, we can directly compute a lower bound on $\|\hat{x}-x_{0}\|_{2}$. However, under certain conditions, there is a contradiction between those two error bounds, indicating that the matrix $\Phi$ does not satisfy $\omega$-RIP-NSP.

    Since $d\in\ker(\Phi)$ and $\Phi\varphi_1=(1,0,\ldots,0)$, we have
                $$
      \|\Phi \hat{x}-\Phi x_0\|_{2}=\left\|\Phi x_{0}-\Phi \rho \varphi_{1}-\Phi d-\Phi x_{0}\right\|_{2}=\left\|-\rho \Phi \varphi_{1}\right\|_{2}=\rho.
      $$

      We wish to get an error bound from Proposition \ref{probd}, so $\|\hat{x}\|_{\omega,1}\leq\|x_{0}\|_{\omega,1}$ needs to be proven.\\
      Since $s>1$ and $\|\omega\|_\infty^2\in[\gamma,1]$, known that 
      \begin{align*}
        \|\hat{x}\|_{\omega,1}&=\alpha\sum_{i=1}^k\omega_i\leq\alpha s
        =\frac{(N-s)s}{(N-4s)\sum_{i=1}^s(\frac{1}{2^{i+1}\omega_i})}
        \leq\frac{2s(N-s)}{(N-4 s)(1-2^{-s})}<\frac{4(N-s)s}{(N-4s)};\\
        \|x_{0}\|_{\omega,1}&=(N-4s)\sum_{i=1}^s(\frac{1}{2^{i+1}\omega_i})
        \geq\frac{1}{2}(N-4s)(1-2^{-s})>\frac{1}{4}(N-4s).
      \end{align*}
    If $N\geq24s$, 
    \begin{equation}
        \|\hat{x}\|_{\omega,1}<\frac{4(N-s)s}{(N-4s)}\leq\frac{23}{5}s
        <5s\leq\frac{1}{4}(N-4s)<\|x_{0}\|_{\omega,1}.
    \end{equation}

   So if $N\geq24s$, $\|\hat{x}\|_{\omega,1}\leq\|x_0\|_{\omega,1}$.

      By Proposition \ref{probd} and Theorem \ref{01th}, $\lambda(\Phi)=1$ and $\rho=\sqrt{\alpha^2s+N-s}$, the estimation of the error bound is
      \begin{equation}
    \label{ebd11}          
     \left \|\hat{x}-x_{0} \right \| _{2}\le B_2\frac{\sqrt{1+\delta_{2s}}\sqrt{\frac{N}{s}+1}\rho}{\lambda(\Phi)}
     =B_2\sqrt{1+\delta_{2s}}\sqrt{\frac{N}{s}+1}\sqrt{N+(\alpha^{2}-1)s},
    \end{equation}
     
      where $\delta_{2s}<\frac{1}{11}$ and $B_2$ is defined in \eqref{b2}.

    Since $N\geq24s$ and $\alpha = \frac{(N-s)}{(N-4s)\sum_{i=1}^s(\frac{1}{2^{i+1}\omega_i})}$, implies $2\gamma<\alpha<5$. 
     Then square \eqref{ebd11} implies
      \begin{equation}
      \label{ebd31}
        \|\hat{x}-x_{0}\|_{2}^{2}\leq C'\left(\frac{N}{s}+1\right)(N+(\alpha^{2}-1)s)\leq2C'N\left(\frac{N}{s}+1\right).
      \end{equation}
      where $C'=B_2^2(1+\delta_{2s})$.\\

      We can also calculate the error bound directly,
      \begin{align}
        \left\|\hat{x}-x_{0}\right\|_{2}^{2} &=\left\|\rho \varphi_{1}+ d\right\|_{2}^{2}=\sum_{i=1}^{s}\left(\alpha+\frac{N-4s}{2^{i+1}\omega_i} \right)^{2} \\
        \label{ebd21}
        &=\sum_{i=1}^{s}\left(\alpha^{2}+2 \alpha \frac{N-4s}{2^{i+1}\omega_i}+\frac{(N-4 s)^{2}}{4^{i+1}\omega_i^2}\right) \geq \frac{(N-4 s)^{2}}{16}.
      \end{align}

     If $s>3717120$, while $\delta_{2s}<\frac{1}{11}$, we have $s>160C'$. Known that $N\geq24s$, the estimation of error bounds is
\begin{align*}
    \frac{(N-4s)^{2}}{16}\geq\frac{N^2}{40}>2C'N(\frac{N}{s}+1).
\end{align*}

     So there is a contradiction between \eqref{ebd31} and \eqref{ebd21},  so $\Phi$ does not satisfy $\omega$-RIP-NSP.
    \end{proof}

\newpage
\nocite{*}
\bibliographystyle{IEEEtran}
\bibliography{template}

\end{document}